\theoremstyle{plain}
\newtheorem{lemma}{Lemma}
\newtheorem{theorem}[lemma]{Theorem}
\newtheorem{corollary}[lemma]{Corollary}
\newtheorem{proposition}[lemma]{Proposition}
\newtheorem{remark}{Remark}
\def\D{\Delta}
\def\G{\Gamma}
\def\g{\gamma}
\def\int{\textrm{int}}
\newcommand{\matr} [4] {\left(\begin{array}{@{}c@{\ }c@{}} #1 & #2 \\ #3 & #4 \\ \end{array} \right)}
\newcommand{\seifuno}[3]{\big(#1,({\scriptstyle #2},{\scriptstyle #3})\big)}
\newcommand{\seifdue}[5]{\big(#1,({\scriptstyle #2},{\scriptstyle #3}),
                       ({\scriptstyle #4},{\scriptstyle #5})\big)}
\newcommand{\bigu}[4]{\bigcup\nolimits_{{\tiny{\matr {#1} {#2} {#3} {#4}}}\phantom{\Big|}\!\!}}
\begin{document}

\title{$4$-colored graphs and knot/link complements}

\author{P. CRISTOFORI, E. FOMINYKH, M. MULAZZANI, V. TARKAEV}

\address{Paola CRISTOFORI, Dipartimento di Scienze Fisiche, Informatiche e Matematiche, Universit\`a di Modena e Reggio Emilia, Italy}

\email{paola.cristofori@unimore.it}

\address{Evgeny FOMINYKH, Chelyabinsk State University and Krasovskii Institute of Mathematics and Mechanics, Russia}

\email{efominykh@gmail.com}

\address{Michele MULAZZANI, Dipartimento di Matematica and ARCES,  Universit\`a di Bologna, Italy}

\email{mulazza@dm.unibo.it}

\address{Vladimir TARKAEV, Chelyabinsk State University and Krasovskii Institute of Mathematics and Mechanics, Russia}

\email{v.tarkaev@gmail.com}

\begin{abstract}
A representation for compact $3$-manifolds with  non-empty non-spherical boundary via $4$-colored graphs (i.e. $4$-regular graphs endowed with a proper edge-coloration with four colors) has been recently introduced by two of the authors, and an initial classification of such manifolds has been obtained up to 8 vertices of the representing graphs.
Computer experiments show that the number of graphs/manifolds grows very quickly as the number of vertices increases. As a consequence, we have focused on the case of orientable $3$-manifolds with toric boundary, which contains the important case of complements of knots and links in the $3$-sphere.

In this paper we obtain the complete catalogation/classification of these $3$-manifolds up to $12$ vertices of the associated graphs, showing the diagrams of the involved knots and links.
For the particular case of complements of knots, the research has been extended up to $16$ vertices. 

\bigskip
\medskip

\noindent {\it 2010 Mathematics Subject Classification:} 57M25, 57N10, 57M15.

\smallskip

\noindent {\it Key words and phrases:} 3-manifolds, colored graphs, knot/link complements.
\end{abstract}

\maketitle

\section{Introduction and preliminaries}\label{sec1}

The representation of closed $3$-manifolds by $4$-colored graphs has been independently introduced in the seventies by M. Pezzana and in the eighties by S. Lins (see \cite{[FGG]} and \cite{[Li]}), by using dual constructions. A $4$-colored graph is a regular graph of degree four, endowed with a proper edge-coloration (i.e., adjacent edges have different colors) with four colors, and it represents a closed $3$-manifold if and only if a certain combinatorial condition is satisfied.
This type of representation has been very fruitful with regard to the catalogation of closed $3$-manifolds in terms of increasing complexity, which in this context means number of vertices of the representing graph (see \cite{[Li]}, \cite{[BCrG$_1$]}, \cite{[CC]} and \cite{[CC1]}).

The extension of the representation by $4$-colored graphs to $3$-manifolds with boundary has been performed in \cite{[CM]}, where any $4$-colored graph is associated to a compact $3$-manifold with (possibly empty)  boundary without spherical  components, this correspondence being surjective on the whole class of such manifolds. 

As a consequence, an efficient computer aided catalogation of $3$-manifolds with boundary can be performed by this tool. Some computational results have been already obtained in  \cite{[CM]}, namely the classification of all manifolds representable by a graph of order $\leqslant 8$ in the orientable case and $\leqslant 6$ in the non-orientable one.

In the present paper our main goals are the creation of:
\begin{itemize}
\item[(a)] the census of all non-isomorphic contracted  bipartite $4$-colored graphs (without $2$-dipoles) up to some order, representing compact orientable $3$-manifolds with (possibly disconnected) toric boundary;
 \item[(b)] the census of the prime $3$-manifolds represented by the above graphs;
\item[(c)] the catalogue of all manifolds in the above census which are complements of knots and links in the $3$-sphere, also giving, for each of such manifolds, the diagram of a corresponding
knot/link. 
\end{itemize}


For (a), we wrote a script that, for each positive integer $p$, enumerates all non-isomorphic contracted $4$-colored graphs with $2p$ vertices and no $2$-dipoles, representing compact bordered $3$-manifolds, possibly fixing the topological type of their boundary. In Section~\ref{sec3}, after describing the enumeration process, we give computational results for $2p \leqslant 12$ with no restriction on the orientability and the topological type of the boundary (see Table~\ref{tab:1}), and for $2p \leqslant 16$ in the case of graphs representing orientable manifolds with toric boundary (see Table~\ref{tab:2} in the first two rows).


In order to get the census of (b) from the one of (a), three steps were necessary. 
The first one consisted in removing from the census non-rigid $4$-colored graphs, since they were not minimal among those representing the same manifold or they represented non-prime manifolds (see Corollary \ref{rigid}).

In the second step, this ``reduced'' census (see Table~\ref{tab:2} in the third row) was processed by \verb"3-Manifold Recognizer" \cite{[Recognizer]}, a computer program for studying $3$-manifolds,  written by V. Tarkaev according to an algorithm elaborated by S. Matveev and other members of the Chelyabinsk topology group. For each $4$-colored graph $\G$ this program constructed a triangulation of the manifold $M_{\G}$ associated to $\G$ and thus obtained a census of compact orientable $3$-manifolds with toric boundary. 
Then \verb"3-Manifold Recognizer" removed non-prime manifolds from the census and recognized which manifolds were graph manifolds of Waldhausen (see \cite{[Wa]}). 
It means that for each graph manifold the program described its canonical Seifert blocks and the way how they are glued together (see Subsections \ref{Seifert} and \ref{graph-manifolds}). 
Then, according to Waldhausen classification of graph manifolds, we removed duplicates from the census. For the manifolds $M_{\G}$ that were not recognized by \verb"3-Manifold Recognizer", the program returned the isomorphism signature\footnote{We recall that the isomorphism signature is an improvement by B. Burton \cite{[Burton]} of the (non-canonical) dehydration sequences \cite{[CHW]}. It is a complete invariant of the combinatorial isomorphism type of a triangulation.} of the triangulation corresponding to $\G$.

In the third step the census of (b) was completed by using \verb"SnapPy" \cite{[SnapPy]}, a computer program for studying the topology of $3$-manifolds, written by M. Culler and N. Dunfield using the \verb"SnapPea" kernel by J. Weeks, with contributions from many others. 
More precisely, the isomorphism signatures were entered into \verb"SnapPy", which identified all still unrecognized manifolds, except two, as hyperbolic manifolds in its orientable cusped census or in its censuses of Platonic manifolds (see Section~\ref{sec4}). 
The two exceptional manifolds were identified subsequently in the catalogue of (c) as complements of the links $L10n111$ and $L12n2205$ in the $3$-sphere. 

For the census in (c), we used the Thistlethwaite link table up to $14$ crossings distributed with \verb"SnapPy" and in some cases the recognized structure of graph manifolds. It turned out that all $3$-manifolds in our census except two were complements of links in the $3$-sphere (see Table~\ref{tab:3}). For each manifold of this kind we also displayed the diagram of a corresponding link (see Figures~\ref{links1} and \ref{links2}). When more than one link was available a prime one was preferred (identified by its name in the Thistlethwaite link table). 



\section{From $4$-colored graphs to compact $3$-manifolds}
\label{sec2}

Let $\G$ be a finite graph which is $4$-regular (i.e., each vertex has degree four), possibly with multiple edges but with no loops.  A map $\g:E(\G)\to \D=\{0,1,2,3\}$ is called a \textit{$4$-coloration} of $\G$ if adjacent edges have different colors. An edge of $\G$ colored by $c\in\D$ is also called a \textit{$c$-edge}.

A \textit{$4$-colored graph} is a connected $4$-regular graph equipped with a $4$-coloration. 
It is easy to see that any $4$-colored graph has even order.
If $\D'\subseteq\D$, a
\textit{$\D'$-residue} (as well as a \textit{$\vert\D'\vert$-residue}) of $\G$
is any connected component of the subgraph $\G_{\D'}$ of $\G$
containing exactly all $c$-edges, for each $c\in\D'$. Of course,
$0$-residues are vertices, $1$-residues are edges and $2$-residues are
bicolored cycles. 
For each $c\in\D$, we denote by $g_c$ the number of $3$-residues corresponding to the colors of $\hat c=\D-\{c\}$.

A  $4$-colored graph $\G$ induces a compact connected $3$-manifold $M_{\G}$
without spherical boundary components  via the following construction.

By attaching a disk to each $2$-residue of $\G$ (considered as a $1$-dimensional cellular complex) we obtain a $2$-dimensional polyhedron $P_{\G}$. 
Each $3$-residue of $\G$, with its associated disks, forms a closed connected surface $S$ embedded in  $P_{\G}$. If $S$ is a $2$-sphere the residue is called \textit{ordinary}, otherwise it is called \textit{singular}.

Then, for any $3$-residue $S$, fill it with a $3$-ball if it is ordinary, and just thicken it by attaching $S\times I$ along $S\times \{0\}$ if it is singular.  At the end of this process, a compact connected $3$-manifold $M_{\G}$ with non-spherical boundary components is obtained.
We will say that $\G$ \textit{represents} $M_{\G}$, as well as that  $M_{\G}$ is  \textit{associated} to $\G$. Obviously isomorphic $4$-colored graphs\footnote{Two $4$-colored graphs $\G'$ and $\G''$, with coloration $\g'$ and $\g''$ respectively, are isomorphic if there exist a graph isomorphism $\phi$ between $\G'$ and $\G''$ and a permutation $\sigma$ of $\D$ such that $\g''\circ\phi=\sigma\circ\g'$.} represent homeomorphic $3$-manifolds.

For closed $3$-manifolds the construction reduces to the one introduced by Lins (see \cite{[Li]}), and it is dual to the one introduced by Pezzana (see \cite{[FGG]}). So, the novelty of this construction is that it works also in case of $3$-manifolds with boundary, and  $\G\mapsto M_{\G}$  is a surjective map  from the whole class of $4$-colored graphs to the whole class of compact $3$-manifolds with (possibly empty) boundary without spherical components.

In fact, the following result is proved in \cite{[CM]}.

\begin{proposition} If $M$ is a compact $3$-manifold with (possibly empty) boundary without spherical components, then there exists a $4$-colored graph $\G$ such that $M$ is homeomorphic to $M_{\G}$. Moreover, $M_{\G}$ is orientable if and only if $\Gamma$ is bipartite.
\end{proposition}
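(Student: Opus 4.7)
The plan is to reduce to the classical Pezzana representation theorem for closed $3$-pseudo-manifolds by coning off the boundary of $M$. First I would choose a pseudo-simplicial triangulation $T$ of $M$ in which each boundary component $B_i$ appears as a subcomplex. For every $B_i$ I attach a cone $c(B_i)$ along $B_i$, obtaining a closed $3$-pseudo-manifold $\widehat M$ whose singular set is precisely the set of cone apices $v_i$. Away from the $v_i$ the space $\widehat M$ is a genuine $3$-manifold, and a small open neighborhood of $v_i$ in $\widehat M$ is the open cone on $B_i$, which collapses onto $v_i$.

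Next I would invoke (the pseudo-simplicial version of) Pezzana's theorem: after suitable barycentric-type subdivisions, $\widehat M$ admits a representation by a $4$-colored graph $\G$ in the standard dual sense, i.e.\ vertices correspond to $3$-simplices and $c$-edges to $2$-face identifications opposite the $c$-th vertex. By the usual color-permutation trick one can arrange that each cone apex $v_i$ is dual to a single $\hat c$-residue for a fixed color $c\in\D$. I then check that the closed surface $S_i$ carried by this residue is precisely $B_i$, which is non-spherical by hypothesis, so this residue is singular in the sense of Section~\ref{sec2}; all other $3$-residues bound balls and are therefore ordinary. Running the construction that associates $M_\G$ to $\G$ now reverses the coning: ordinary residues are filled with $3$-balls, which reassembles the interior of $M$ out of its tetrahedra, while each singular residue contributes a collar $S_i\times I$, reproducing a neighborhood of $B_i$ in $M$. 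Hence $M_\G\cong M$.

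For the second assertion I would use the standard crystallization argument. Fix a cyclic ordering of $\D$; at each vertex $v$ of $\G$ the colors of the four incident edges acquire an orientation that corresponds to an orientation of the dual $3$-simplex. Along any $c$-edge connecting vertices $v$ and $w$, the induced orientations on the shared $2$-face agree (giving a coherent orientation across this face) iff $v$ and $w$ carry opposite cyclic orders. This compatibility can be realized globally iff the vertices admit a $2$-coloring consistent with every color class, i.e.\ iff $\G$ is bipartite, which is therefore equivalent to the orientability of $M_\G$. The same argument transfers without change from the closed case to the present one, since the attachments of $S\times I$ along singular residues preserve orientability.

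The main obstacle lies in the middle step: guaranteeing that after all the subdivisions needed to produce a proper $4$-coloration, each cone apex $v_i$ still corresponds to exactly one singular $3$-residue, with associated surface homeomorphic to the original boundary component $B_i$ rather than to some collapsed or split-up variant. This amounts to careful bookkeeping of how subdivisions and dipole insertions interact with the link of $v_i$, but all the required moves are those already employed in the closed-manifold theory, which is why this result (proved in \cite{[CM]}) is essentially a clean adaptation rather than a genuinely new construction.
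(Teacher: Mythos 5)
The paper does not actually prove this proposition --- it cites \cite{[CM]} --- and the argument there is essentially the one you sketch: cone off each non-spherical boundary component to get a singular $3$-manifold, crystallize it via the pseudo-simplicial Pezzana theorem, and observe that the construction of $M_\G$ undoes the coning by filling spherical residues with balls and replacing each singular residue by a collar, with bipartiteness characterizing orientability exactly as in the closed case. Your outline is correct; the one point worth adding is that the step you flag as the ``main obstacle'' dissolves if you take the first barycentric subdivision of the coned triangulation and label each vertex by the dimension of the simplex it subdivides: this yields a properly $4$-colored triangulation in one stroke, in which each cone apex $v_i$ is automatically dual to a single $\hat 0$-residue whose associated surface is $\mathrm{lk}(v_i)=B_i$, so no dipole insertions or further bookkeeping are needed for mere existence (they matter only if one wants a contracted graph, which the statement does not require).
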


Since two compact $3$-manifolds are homeomorphic if and
only if (i) they have the same number of spherical boundary components
and (ii) they are homeomorphic after capping off by balls these
components, there is no loss of generality in
studying compact $3$-manifolds without spherical boundary
components. In the following with the term ``manifold''  we will always refer to such type of $3$-manifolds.

\begin{figure*}
\includegraphics[scale=0.4]{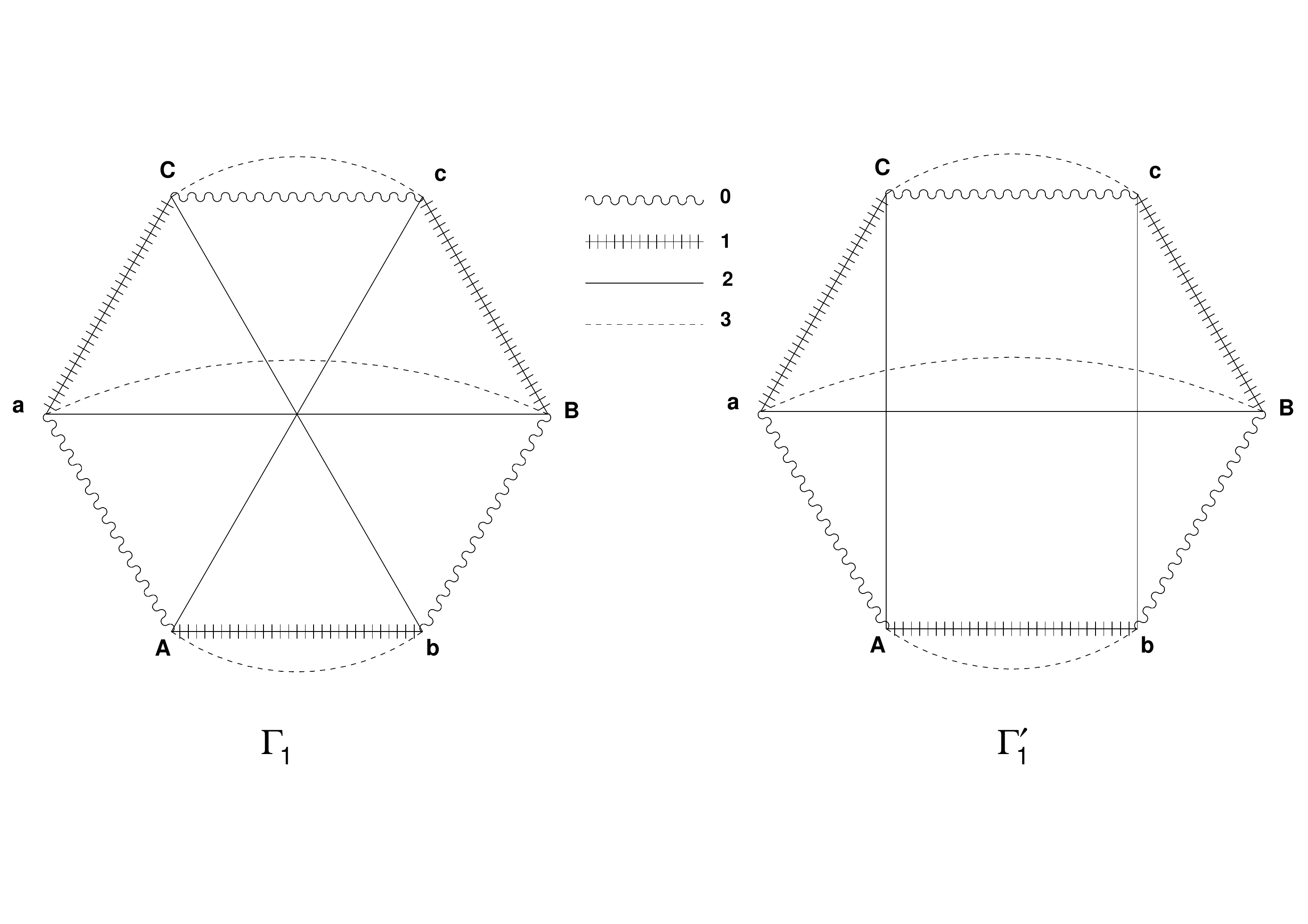}
\caption{The $4$-colored graphs $\G_1$ and and $\G_1'$, representing the genus one orientable and non-orientable handlebody respectively.}
\label{fig: Fig1}
\end{figure*}

In \cite{[CM]}, $4$-colored graphs representing important families of compact $3$-manifolds, such as handlebodies (both orientable and non-orientable, see Fi\-gure~\ref{fig: Fig1}), and $S\times I$, where $S$ is any closed surface  (either orientable or non-orientable) are shown.

As it usually happens in any representation technique of $3$-manifolds (triangulations, spines, branched coverings, surgery on links, etc...), any compact $3$-manifold admits representations by infinitely many $4$-colored graphs. So, moves connecting non-isomorphic graphs representing the same $3$-manifold are important tools in the theory. 

Let $v$ and $v'$ be adjacent vertices of a $4$-colored graph $\G$ of order $> 2$, and let $\theta$ be the subgraph of $\G$ consisting of $v,v'$ and all the $h$ edges connecting them. Then $\theta$ is called an  \textit{$h$-dipole} if  $v'$ and $v''$ belong to different $(\D -\{c_1,\ldots,c_h\})$-residues
of $\G$, where $\{ c_1,\ldots, c_h\}$ are the colors of the $h$ edges of $\theta$.

By \textit{cancelling} $\theta$ \textit{from} $\G$, we mean to remove $\theta$ and to paste together the hanging edges according to their colors, thus obtaining a new $4$-colored graph $\G^\prime$. Conversely, $\G$ is said to be obtained from $\G^\prime$ by \textit{adding} $\theta$.
An $h$-dipole $\theta$ is called \textit{proper} when $\G$ and $\G^\prime$ represent the same manifold.

\begin{proposition}\label{proper}  \cite{[CM]} An $h$-dipole $\theta$ in a $4$-colored graph $\G$ is proper if and only if one of the following conditions holds:
\begin{itemize} \item $h>1$;
\item $h=1$ and at least one of the $\hat c_1$-residues containing the two vertices of $\theta$ is ordinary.
\end{itemize}
\end{proposition}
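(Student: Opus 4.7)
I would argue by localizing the change of $M_\G$ under dipole cancellation: only a regular neighborhood of $\theta$ in $P_\G$, together with the disks attached to the 2-residues incident to $\theta$, is affected. The natural split of the argument is by the value of $h \in \{1,2,3\}$, and within $h=1$ by the ordinary/singular types of the two $\hat c_1$-residues.

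\textbf{Sufficiency for $h\ge 2$.} Let $\{c_1,\dots,c_h\}$ be the colors of $\theta$ and let $\{c_{h+1},\dots,c_4\}$ be the complementary colors. The $\{c_1,\dots,c_h\}$-residue of $\G$ containing $v,v'$ is a $\theta$-graph (for $h=2$) or a pair of parallel triple edges (for $h=3$); together with the disks attached to its 2-residues it forms a local 2-sphere inside $P_\G$, i.e.\ the corresponding residue at $\theta$ is always ordinary. The complementary residues containing $v,v'$ are different by the dipole hypothesis, so the cancellation just amalgamates the two ``stubs'' exiting $\theta$ without changing their combinatorial type elsewhere. I would then exhibit an explicit 3-ball inside $M_\G$ bounded by that local 2-sphere and a disk coming from the merged complementary residue, and collapse it to produce a homeomorphism $M_\G \cong M_{\G'}$. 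In the closed case this is exactly the Pezzana/Lins dipole move, and the argument carries over verbatim because for $h\ge 2$ the relevant local residue is automatically ordinary, so no singular 3-residue is ever destroyed or created.

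\textbf{Sufficiency for $h=1$ with one ordinary $\hat c_1$-residue.} Let $R,R'$ be the two $\hat c_1$-residues through $v$ and $v'$, closed surfaces embedded in $P_\G$; cancellation fuses $R$ and $R'$ into a single 3-residue $R''$ of $\G'$, which is topologically the connected sum $R\# R'$ performed along a disk $D$ coming from the vanishing $c_1$-edge. Assume $R$ is ordinary, so $R\cong S^2$ and the corresponding piece of $M_\G$ is a 3-ball $B$ bounded by $R$. Then $R''\cong R'$, and if $R'$ is ordinary the 3-ball bounded by $R''$ in $M_{\G'}$ is just $B\cup B'$ (where $B'$ is the ball bounded by $R'$); if $R'$ is singular, the collar $R'\times I$ attached along $R'$ may be absorbed into $B$ and reglued along $R''\cong R'$ without changing $M_{\G'}$. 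Either case gives $M_\G\cong M_{\G'}$.

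\textbf{Necessity for $h=1$ with both residues singular.} In this case $R$ and $R'$ produce two distinct non-spherical boundary components of $M_\G$. After cancellation the merged residue $R''=R\# R'$ is a single (and in general different) closed surface giving only one boundary component of $M_{\G'}$. Since $|\partial M_\G|\ne |\partial M_{\G'}|$, the dipole is not proper.

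\textbf{Main obstacle.} The delicate part is the $h=1$ sufficiency: one has to verify that the local surgery along the disk $D$, which replaces two boundary pieces of $P_\G$ by a connected sum, corresponds \emph{inside} $M_\G$ to a legitimate homeomorphism when at least one side is filled by a ball. The orientable/non-orientable and ordinary/singular distinctions need to be tracked carefully, and one must justify the ``ball-swallows-collar'' isotopy rigorously. Everything else ($h=2,3$ cases, and the necessity argument) reduces either to the standard closed-case dipole move or to a counting of boundary components.
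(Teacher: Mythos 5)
First, a caveat on the comparison itself: the paper does not prove this proposition --- it is quoted from \cite{[CM]} with a citation and no argument --- so there is no in-paper proof to measure your proposal against; I can only assess it on its own terms. Your overall architecture (split into $h\geqslant 2$, $h=1$ with an ordinary residue, $h=1$ with both residues singular) is the standard one, and the necessity half is essentially complete: when $h=1$ and both $\hat c_1$-residues $R,R'$ through $v,v'$ are singular, they are distinct by the dipole condition, the cancellation replaces them by the single residue $R\# R'$, all other $3$-residues are unaffected, and so $\partial M_{\G'}$ has strictly fewer components than $\partial M_\G$; hence the dipole is not proper. That argument is correct as you state it.

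The sufficiency half, however, contains one concrete error and leaves the decisive step as an acknowledged claim. For $h=2$ the $\{c_1,c_2\}$-residue through $v,v'$ is a bigon, i.e.\ a single $2$-residue; with its one attached disk it is a $2$-disk, not a $2$-sphere, and ``ordinary'' is not even defined for $2$-residues (your description also swaps the combinatorics: the theta-graph with three parallel edges is the $h=3$ configuration, and only there does Euler characteristic $2-3+3=2$ give a sphere). For $h=2$ the ball to be collapsed must be assembled from pieces of the disks attached to the four $\{c_i,c_j\}$-residues with $i\in\{1,2\}$, $j\in\{3,4\}$, together with the two \emph{distinct} $\{c_3,c_4\}$-disks at $v$ and $v'$; it is precisely this distinctness (the defining condition of a $2$-dipole) that makes the resulting sphere bound a ball rather than produce a non-trivial identification, and your construction never brings that condition to bear. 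For $h=1$, the ``ball swallows collar'' homeomorphism --- showing that filling $R\cong S^2$ with a ball, attaching $R'\times I$ (or a ball) along $R'$, and regluing along $R''\cong R'$ after the cancellation yields the same manifold --- is exactly the content of the proposition in the only genuinely new case beyond the closed theory, and you flag it as the main obstacle without carrying it out. As written, the proposal is a correct plan with a wrong local model in the $h=2$ case and the key $h=1$ verification missing.
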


In the closed case all dipoles are proper and Casali proved in \cite{[C$_1$]} that dipole moves are sufficient to connect different $4$-colored graphs representing the same manifold. A similar fact is no longer true in our context; in fact dipole moves do not change the singular colors\footnote{By a \textit{singular color} of a $4$-colored graph $\G$, we mean a color $c\in\D$ such that at least one of the $\hat c$-residues of $\G$ is singular.} of the involved graph, and for any $3$-manifold with disconnected boundary it is easy to find two different
graphs representing it, the first one with only a singular color and the second one with (at least) two singular colors.

A $4$-colored graph $\G$  is said to be  \textit{contracted} if either $g_c=1$ or all $\hat c$-residues of $\G$ are singular, for each $c\in\D$.
Note that a $4$-colored graph is contracted if and only if it does not contain any proper $1$-dipole, and a contracted $4$-colored graph contains no $3$-dipoles. Moreover, by cancelling proper $1$-dipoles, any $4$-colored graph can be reduced to a contracted one representing the same manifold. So, any compact $3$-manifold admits representations by contracted $4$-colored graphs. 

A $4$-colored graph $\G$ is called {\it minimal} if there exists no graph with fewer vertices representing the same manifold. Of course, a graph containing a proper dipole is not minimal. In particular, any minimal graph is contracted.


\section{First computational results}\label{sec3} 

The combinatorial nature of colored graphs makes them particularly suitable for computer manipulation. In particular, it is possible to generate catalogues of $4$-colored graphs for increasing number of vertices, in order to analyze the represented compact $3$-manifolds.
We focus our attention on $3$-manifolds with non-empty boundary, since the catalogation  of graphs associated to closed manifolds (also called \textit{gems}, and  \textit{crystallizations} in the contracted case), has been already performed up to 30 vertices (see \cite{[BCrG$_1$]} and \cite{[CC]}).

\subsection{Description of the enumeration process} \mbox{}

By the results of Section~\ref{sec2}, without loss of generality we can restrict the catalogues to contracted graphs with no $2$-dipoles. 
Therefore, given a positive integer $p$, the catalogue of contracted $4$-colored graphs with $2p$ vertices and no $2$-dipoles is generated algorithmically in the following way. 
 
First of all consider the set of all (possibly disconnected) $2$-colored graphs (i.e., disjoint union of $\{0,1\}$-colored cycles) with $2p$ vertices.
For any element of this set, add the $2$-edges in all possible way in order to obtain, by checking  bipartition and computing the Euler characteristic of the connected components, the set $\mathcal S^{(2p)}$ of all (possibly disconnected) $3$-colored graphs $\G$ with $2p$ vertices, such that each connected component of $\G$ represents a surface of positive genus. For the values of $p$ from $1$ to $6$, the cardinality of $\mathcal S^{(2p)}$ is $0,1,3,14,71$ and $553$, respectively.

Afterwards, the $3$-edges are added to each graph of $\mathcal S^{(2p)}$ in all possible ways giving rise to a contracted $4$-colored graph without $2$-dipoles.

In order to avoid the presence of isomorphic graphs in the catalogue, the concept of code of a $4$-colored graph is an useful tool.
The \textit{code} of a $m$-bipartite $4$-colored graph\footnote{A $4$-colored graph $\G$ is called $m$-bipartite,  if $m$ is the largest integer such that all $m$-residues of $\G$ are bipartite.} $\G$ with $2p$ vertices is a ``string'' of length  $(7-m)p$, which
completely describes both combinatorial structure and coloration of $\G$ (see 
\cite{[CG]} for details). The importance of the code for representing $4$-colored graphs relies on the fact that two $4$-colored graphs are isomorphic if and only if they have the same code. As a consequence, by using codes we can produce catalogues containing only graphs which are pairwise non-isomorphic.

Table~\ref{tab:1} shows the number of non-isomorphic contracted $4$-colored graphs without $2$-dipoles and inducing manifolds with non-empty boundary, where $C^{(2p)}$ (resp. $\tilde{C}^{(2p)}$) refers to  bipartite (resp. non-bipartite) graphs of order $2p$, with $1\leqslant p\leqslant 6$.

\bigskip

\begin{table}[htp]
\caption{$4$-colored graphs with $2p$ vertices and no dipoles representing $3$-manifolds with boundary}
\label{tab:1}
 \begin{tabular}{|c||c|c|c|c|c|c|} 
  \hline
   {\bf 2p } & 2 & 4 & 6 & 8 & 10 & 12\\
 \hline\hline \ & \ & \ & \ & \ & \ & \ \\
  {\bf $C^{(2p)}$} & 0 & 0 & 2 & 4 & 57 & 902\\ 
\hline \ & \ & \ & \ & \ & \ & \ \\
 {\bf $\tilde{C}^{(2p)}$} & 0 & 1 & 6 & 90 & 3967 & 395877\\
   \hline \end{tabular}
\end{table}

\bigskip

With regard to the classification of the manifolds represented by the above graphs, some results concerning cases with small number of vertices have been already established:

\vbox{
\begin{proposition}   \cite{[CM]}
\begin{itemize}
\item There exist exactly five non-closed compact orientable $3$-manifolds admitting a representation by a $4$-colored graph of order $\leqslant 8$.
\item There exist exactly seven non-closed compact non-orientable $3$-manifolds admitting a representation by a $4$-colored graph of order $\leqslant 6$.
\end{itemize}
\end{proposition}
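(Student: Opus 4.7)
The argument is essentially a finite check made possible by the enumeration of Section~\ref{sec3} together with the reduction results of Section~\ref{sec2}. By Proposition~\ref{proper} every $2$-dipole is proper, and by the definition of a contracted graph every proper $1$-dipole can be cancelled without changing the represented manifold; both operations strictly decrease the order. Hence every compact $3$-manifold with non-empty boundary that is representable by a $4$-colored graph of order $\leqslant 2p$ is representable by a \emph{contracted} $4$-colored graph \emph{without $2$-dipoles} of order $\leqslant 2p$, and (using Proposition~1 in the excerpt for the orientable vs.\ bipartite dichotomy) the search can be restricted to the rows of Table~\ref{tab:1}. For $2p \leqslant 8$ in the bipartite case this gives a total of $0+0+2+4=6$ graphs, and for $2p \leqslant 6$ in the non-bipartite case a total of $1+6=7$ graphs.

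The plan is then to run through these two finite lists and, for each graph $\Gamma$, to carry out the construction of Section~\ref{sec2} by hand: attach a disk to every bicolored cycle to build $P_\Gamma$, classify each $3$-residue as ordinary or singular via the genus formula for regular bipartite/non-bipartite $3$-residues, and cap off or thicken accordingly. This description immediately yields the boundary of $M_\Gamma$ (a disjoint union of the singular surfaces) and exhibits an explicit spine of $M_\Gamma$, which in all of these very small cases will be simple enough to recognize $M_\Gamma$ directly: one expects to meet only handlebodies of small genus (orientable and non-orientable), products $S\times I$ for small-genus closed surfaces $S$, and perhaps a connected sum or boundary-connected sum of such pieces, which already exhaust the manifolds exhibited in Figure~\ref{fig: Fig1} and in the handlebody/$S\times I$ examples cited from~\cite{[CM]}.

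Having identified each $M_\Gamma$ with a named model, the final step is to cluster the $6$ (resp.\ $7$) graphs into homeomorphism classes. When two graphs give the same model, the coincidence is witnessed by the explicit identifications from the previous step; when they give different models, this is certified by a coarse invariant that separates the short list of candidates, such as orientability and topological type of $\partial M_\Gamma$, the first homology $H_1(M_\Gamma;\mathbb{Z})$, or the Heegaard genus. Counting the resulting equivalence classes yields the numbers $5$ and $7$ in the statement.

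The genuinely substantive step is the identification/clustering phase. Certifying non-homeomorphism is easy because the candidate manifolds at this size are separated by elementary invariants, so the real work is on the positive side: showing that two combinatorially distinct graphs represent the \emph{same} manifold—which, in the orientable case, must happen at least once since $6$ graphs collapse to $5$ manifolds. This is done by exhibiting both $M_\Gamma$'s as regular neighborhoods of the same spine, equivalently by producing a sequence of (possibly non-dipole) moves relating the two graphs, as already discussed after Proposition~\ref{proper}. Because the relevant spines have only a handful of $2$-cells, the bookkeeping is tractable and the verification can be performed by direct inspection, which is exactly how the cited statement from \cite{[CM]} is established.
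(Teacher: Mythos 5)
This Proposition is stated in the paper with no proof at all: it is imported from \cite{[CM]}, so there is no internal argument to compare yours against, and your proposal must stand on its own. Its first half does: by Proposition \ref{proper} every $2$-dipole is proper, proper $1$-dipole cancellation yields a contracted graph, and both moves preserve the manifold while strictly lowering the order, so every non-closed manifold in question is represented by one of the $0+0+2+4=6$ bipartite (resp.\ $0+1+6=7$ non-bipartite) contracted $2$-dipole-free graphs of Table~\ref{tab:1}. Reducing to a finite identification-and-clustering problem is indeed the right framing and is how \cite{[CM]} proceeds.

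The gap is in the identification phase, which rests on a false expectation and is never executed. The five orientable manifolds are \emph{not} drawn from ``handlebodies, $S\times I$'s, and (boundary-)connected sums of such pieces'': as Table~\ref{tab:3} and the discussion before it record, they are the solid torus, $T^2\times I$, the Seifert manifold $D^2_2\times S^1$ (three incompressible torus boundary components, hence irreducible, boundary-prime, and not a handlebody or product), a nontrivial graph manifold (the complement of $L8n8$), and a four-cusped hyperbolic manifold (the complement of $L8n7$). Recognizing the last three from an eight-vertex spine is precisely the substantive work, and your proposed certificates of non-homeomorphism do not cover it: the complements of $L8n8$ and $L8n7$ are both orientable with four torus boundary components and $H_1\cong\mathbb{Z}^4$, so orientability, boundary type, and first homology fail to separate them --- one needs something like the geometric/JSJ structure or the fundamental group. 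Finally, the one required positive coincidence (two of the four order-$8$ bipartite graphs representing the same manifold) is asserted to be ``tractable by inspection,'' but no common spine or move sequence is exhibited, and the analogous claim that all seven non-orientable graphs of order $\leqslant 6$ yield pairwise distinct manifolds is likewise left unverified. As written, the proposal is a correct reduction followed by a plan whose stated toolkit would not succeed on the actual census.
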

}

As it is clear from Table~\ref{tab:1}, the number of generated graphs quickly increases with the number of vertices and becomes very large even in the initial segment of the catalogues. So we carried on our research taking care of the case with  (possibly disconnected) toric boundary, which includes the important case of complements of knots or links in the $3$-sphere. In this context, orientable manifolds are of particular interest. 
Notice that the five orientable manifolds of the previous proposition are in fact knot/link complements. 
Namely, the two manifolds represented by graphs with six vertices are the complements of the trivial knot and the Hopf link respectively, whereas the three manifolds represented by graphs with eight vertices are the complements 
of the links $L6n1$, $L8n8$ and  $L8n7$ respectively (see Table~\ref{tab:3} and Figure~\ref{links1}), according to the notation of the Thistlethwaite link table up to $14$ crossings distributed with \verb"SnapPy".
The Thistlethwaite name of a link is of the form $L[k]a[j_1]$ or $L[k]n[j_2]$, depending on whether the link is alternating or not. Here $k$ is the crossing number and $j_1, j_2$ are archive numbers assigned to each $(a, k)$, $(n, k)$ pair, respectively. For example, $L5a1$ is the first alternating link with $5$ crossings while $L6n2$ is the second non-alternating link with $6$ crossings.

At the end of this section, Table~\ref{tab:2} lists in the first two rows the number of non-isomorphic bipartite contracted $4$-colored graphs without $2$-dipoles representing manifolds with toric boundary, indicated by $C^{(2p)}_t$ and the special cases of connected toric boundary, indicated by $C^{(2p)}_{tc}$.
 
\subsection{Irreducible manifolds and rigid graphs} \mbox{}

When a  $4$-colored graph is bipartite, special configurations of its edges give some information about the minimality of the graph and the reducibility of the associated orientable manifold.

A {\it $\rho_i$-pair} of a bipartite $4$-colored graph $\G$, for $2\leqslant i\leqslant 3$, is a pair of two equally colored edges $\{e,f\}$ sharing exactly $i$ $2$-residues of $\G$.

The \textit{switching} of a $\rho_i$-pair $\{e,f\}$ is the operation of replacing the edges of the pair,  by two new  edges $e'$ and $f'$, with the same color as the old ones, connecting the endpoints of $e$ and $f$ in the only way which preserves the bipartition of $\G$ (see Figure~\ref{fig: Fig2} for the case $i=3$).
Each $2$-residue shared by the two edges of the pair splits into two cycles and the new bipartite $4$-colored graph $\G'$ obtained after the switching may be either connected or disconnected with two components. In particular, when $i=2$ the new graph $\G'$ is always connected.

\begin{figure*}
\includegraphics[scale=0.75]{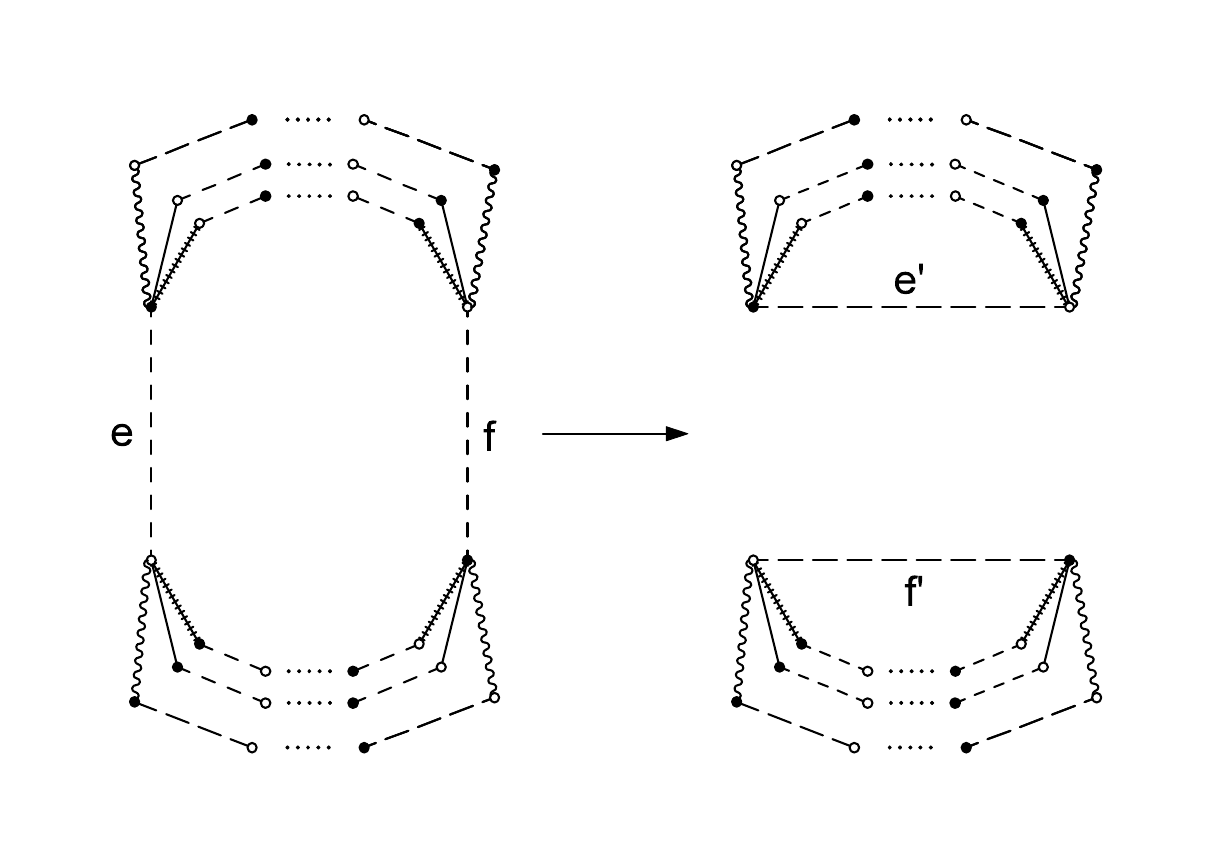}
\caption{Switching a $\rho_3$-pair.} 
\label{fig: Fig2}
\end{figure*}

First consider the effect of $\rho_2$-pair switchings. Let $\{e,f\}$ be a $\rho_2$-pair of $\G$, and let $c,j,k$ be the three colors of the $2$-residues shared by $e$ and $f$, we call  $\{e,f\}$ {\it good} if after the switching the $\{c,j,k\}$-residue containing $e$ and $f$ splits into two connected components and at least one of them is ordinary. The definition relies on the following lemma which extends to manifolds with non-empty boundary the corresponding result established by Lins for the closed case (\cite{[Li]}):

\begin{lemma}  \label{rho2}
Let $\G$ be a bipartite $4$-colored graph with a $\rho_2$-pair, and denote by $\G'$ the $4$-colored graph obtained from $\G$ by switching the pair. Then $\G$ and $\G'$ represent the same $3$-manifold if and only if the pair is good. In this case $\G$ is not minimal.
\end{lemma}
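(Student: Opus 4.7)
\emph{Setup.} Let $\{e,f\}$ be a $\rho_2$-pair of color $c$, sharing the $\{c,j\}$-residue $C_j$ and the $\{c,k\}$-residue $C_k$, with $h$ the fourth color. The switching modifies only $c$-edges inside the $\{c,j,k\}$-residue $R$ containing $e$ and $f$, so all other $2$- and $3$-residues, as well as all $\hat c$-, $\hat j$- and $\hat k$-residues, are left unchanged. Consequently, the only part of the polyhedron $P_\G$ affected is the closed surface $S$ associated to $R$: the cycles $C_j,C_k$ split into $C_j',C_j''$ and $C_k',C_k''$, and $R$ either stays connected or splits into residues $R_1,R_2$ with associated surfaces $S_1,S_2$. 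My plan is to compare, locally inside $S$, the polyhedra and their thickenings/fillings in $M_\G$ and $M_{\G'}$.

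\emph{Forward direction.} Assume the pair is good, so $R$ splits and (without loss of generality) $S_1$ is a $2$-sphere, filled with a $3$-ball $B$ in $M_{\G'}$. I would exhibit an explicit homeomorphism $M_\G \to M_{\G'}$ by a cut-and-paste inside a regular neighborhood of the configuration $D_j \cup D_k \cup e \cup f \subset P_\G$, where $D_j,D_k$ are the disks attached to $C_j,C_k$. Locally this region looks like two disks glued along a theta-graph formed by $e,f$ and two arcs on $C_j,C_k$; the ball $B$ appearing in $M_{\G'}$ corresponds to a neighborhood of one ``side'' of this theta-configuration, which can be inserted inside $M_\G$ without altering its topology. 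The complementary surface $S_2$, together with its prescribed filling or thickening, is homeomorphic to the construction performed on $S$ in $M_\G$, since splitting off a sphere summand preserves the topological type. Hence $M_\G \cong M_{\G'}$.

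\emph{Reverse direction.} Suppose the pair is not good. Either (i) $R$ remains connected after switching, or (ii) $R$ splits into two residues whose surfaces $S_1,S_2$ are both singular. In case (i), a direct Euler-characteristic calculation shows that the genus of the new surface differs from that of $S$, altering a boundary component (or an internal surface) of the manifold. In case (ii), the surface $S$, which is either a boundary component or an internal closed surface of $M_\G$, is replaced by two singular surfaces contributing two distinct such components in $M_{\G'}$; the manifolds are then distinguished by the number of boundary components, or by a count of embedded closed surfaces of given genus. In both cases a topological invariant of $M_\G$ is changed, yielding $M_\G \not\cong M_{\G'}$.

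\emph{Non-minimality.} Under the good-pair hypothesis, the witness for non-minimality is found in $\G'$: the ordinary residue $R_1$ corresponds to a $2$-sphere filled by a ball, and the combinatorial structure of a bipartite $3$-colored graph whose $2$-residues bound disks in $S^2$ forces the existence of a $1$-dipole in $\G'$ whose $\hat c_1$-side inside $R_1$ is ordinary; by Proposition \ref{proper} this dipole is proper, and its cancellation produces a graph of order strictly less than $2p$ representing the same manifold as $\G$. The main obstacle I expect is making the forward direction rigorous: the local cut-and-paste requires careful bookkeeping of the attaching data of all $2$-residues incident to $R$ and of the combinatorial interplay between the split cycles and the arcs $e,f$, while the reverse direction reduces to a comparatively routine Euler-characteristic/counting argument.
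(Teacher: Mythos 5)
Your reverse direction and your non-minimality argument are essentially the paper's: the switching acts on the surface $S$ of the $\{c,j,k\}$-residue as a cut along a closed curve followed by capping with two disks, so a non-good pair either drops the genus of a singular residue or splits it into two singular pieces, changing $\partial M_{\G}$; and when the pair is good, $\G'$ fails to be contracted (it has at least two $\hat h$-residues, one of them ordinary, where $h$ is the fourth color), so a proper $1$-dipole can be cancelled and neither $\G'$ nor $\G$ is minimal. These parts are fine, modulo the quibble that your justification for the existence of the $1$-dipole should be the connectedness of $\G'$ together with $g_h\geqslant 2$ (the sphericity of $R_1$ is what makes the dipole \emph{proper}, not what makes it \emph{exist}).

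The genuine gap is the forward direction, and you have correctly identified it yourself. The statements ``the ball $B$ \ldots can be inserted inside $M_\G$ without altering its topology'' and ``splitting off a sphere summand preserves the topological type'' are exactly the content to be proved, and they are not reducible to a statement about $S$ alone: the surgery takes place along the disks $D_j, D_k$ of $P_\G$, to which the rest of the polyhedron (in particular the $\{c,h\}$-residues through $e$ and $f$, and hence the $\hat\jmath$- and $\hat k$-residues) is attached, so a local homeomorphism of a neighborhood of the theta-configuration does not automatically extend to $M_\G\to M_{\G'}$. (Relatedly, your setup claim that all $\hat\jmath$- and $\hat k$-residues are ``left unchanged'' is false as stated --- they contain $c$-edges and are modified; one must check they keep their topological type, e.g.\ by an Euler characteristic count, which is also needed to make the reverse direction airtight.) The paper sidesteps the entire cut-and-paste by a purely combinatorial factorization: one adds a $2$-dipole to $\G'$ (always proper by Proposition~\ref{proper}), obtaining an intermediate graph in which the configuration coming from $\{e,f\}$ is a $1$-dipole whose cancellation yields $\G$; that $1$-dipole is proper precisely when the pair is good, again by Proposition~\ref{proper}. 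This reduction to already-established dipole moves is the missing idea, and without it (or a genuinely careful global cut-and-paste) the ``if'' half of the lemma is not proved.
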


\begin{proof} 
Let $e,f$ be the edges of the pair, $e',f'$ be the new edges created by the switching, and let $c$ be their color. The $\{c,j,k\}$-residue involved in the switching, considered with its associated disks, is a closed orientable surface $S$. The switching operates on $S$ as follows: (i) split  each one of the two disks containing both $e$ and $f$ by cutting them along a properly embedded arc $\alpha_i$, with $i\in \{j,k\}$, connecting an internal point $E$ of $e$ with an internal point $F$ of $f$; (ii) push the edges $\alpha'_i$ and $\alpha''_i$ obtained from the cutting until both $e$ and $f$ disappear; (iii) then indentify $\alpha'_j,\alpha'_k$ (resp. $\alpha''_j,\alpha''_k$) to a single $c$-colored edge $e'$ (resp. $f'$). 
This operation is topologically equivalent to cutting $S$ along a suitable closed curve, and filling with two disks the two circles arising from the cutting. Consequently, if $\{e,f\}$ is not good then $\partial M_\G\neq\partial M_{\G'}$, since either $S$ decreases its genus or it splits into two components of positive genus.
Conversely, suppose that $\{e,f\}$ is good, then the switching can be factorized by a sequence of proper dipole moves as shown in Figure~\ref{fig: ro2}. Note that $D'$ is a $2$-dipole, hence it is always proper and its cancellation does not change the represented manifold, while $D$ is a proper $1$-dipole if and only if $\{e,f\}$ is good. In this case, $\G'$ is not contracted since it has at least two $\{c,j,k\}$-residues which are not both singular. As a consequence, $\G'$ is not minimal and therefore $\G$ is not minimal either, since it has the same order as $\G'$.
\end{proof}

\begin{figure*}
\includegraphics[scale=0.65]{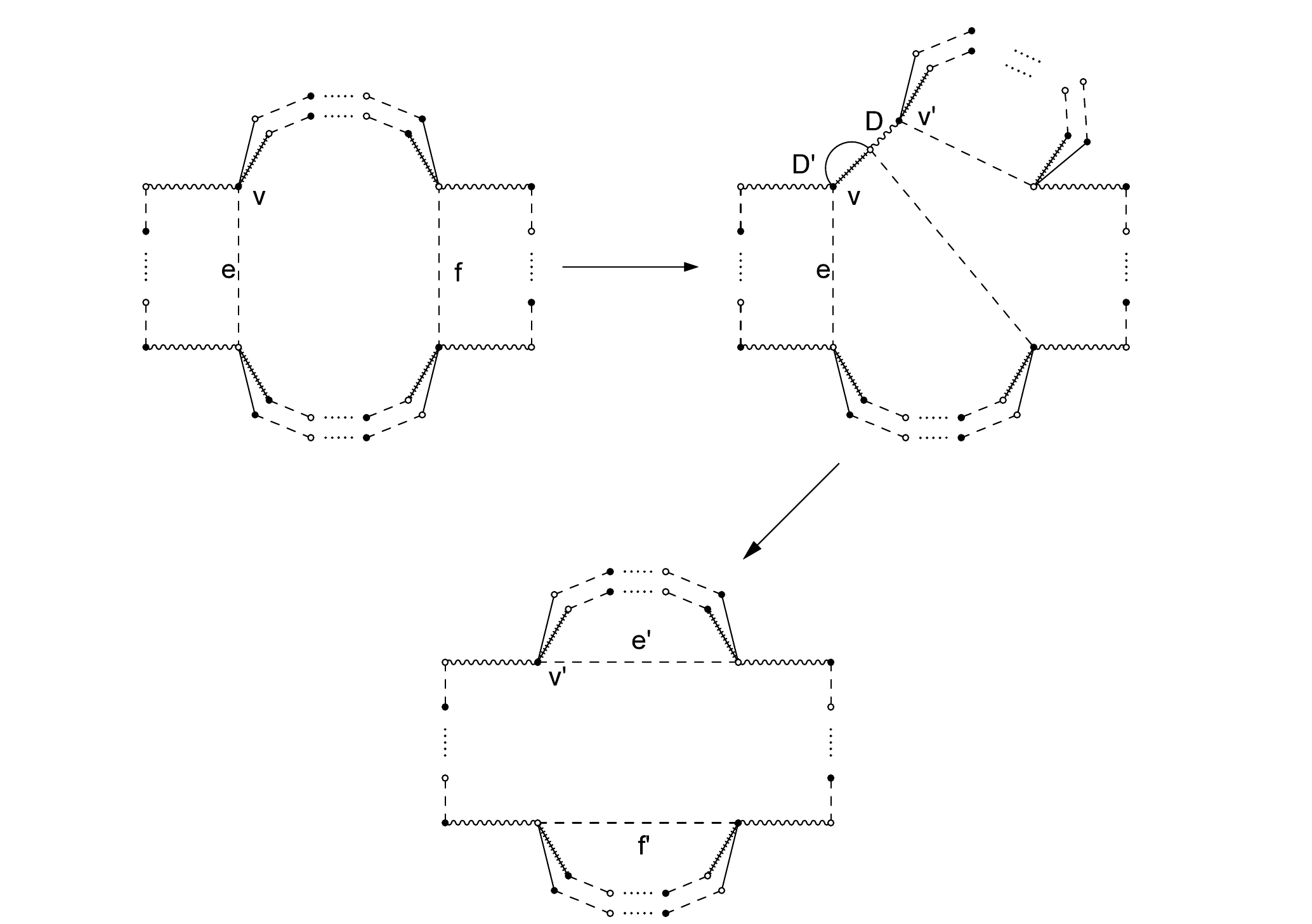}
\caption{Factorization of a $\rho_2$-pair switching} 
\label{fig: ro2}
\end{figure*}


\begin{corollary} \label{minimal}
Any minimal $4$-colored graph representing a compact orientable $3$-manifold contains no good $\rho_2$-pair.
\end{corollary}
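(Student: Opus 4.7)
The plan is to obtain this corollary as an immediate consequence of Lemma~\ref{rho2}, which has already done essentially all of the work. The argument will be a short contrapositive/contradiction argument, so the only real ``content'' is to verify that the hypotheses of the lemma are available in the setting of the corollary.

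First, I would note that since $\G$ represents a compact orientable $3$-manifold, the earlier Proposition (the one asserting that $M_\G$ is orientable if and only if $\G$ is bipartite) tells us that $\G$ is bipartite. This is exactly the hypothesis needed to invoke Lemma~\ref{rho2}, which is formulated for bipartite $4$-colored graphs carrying a $\rho_2$-pair.

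Next, I would argue by contradiction: suppose $\G$ is minimal but contains a good $\rho_2$-pair $\{e,f\}$. Let $\G'$ be the $4$-colored graph obtained from $\G$ by switching the pair. By the ``if'' direction of Lemma~\ref{rho2}, since the pair is good, $\G$ and $\G'$ represent the same $3$-manifold. Moreover, the final sentence of that lemma asserts that, in the good case, $\G$ is not minimal; this contradicts our standing assumption on $\G$, finishing the proof.

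There is no real obstacle here, because the lemma is stated in a form that already packages the non-minimality conclusion. In fact, one could equivalently run the proof by explicitly pointing to the factorization used in the proof of Lemma~\ref{rho2}: after switching, $\G'$ still has $2p$ vertices but contains the configuration of a proper $1$-dipole $D$ together with a $2$-dipole $D'$, whose successive cancellations produce a strictly smaller $4$-colored graph representing the same manifold as $\G$. The only point requiring any attention in the write-up is the bipartiteness remark, which ensures that the lemma applies; everything else is a one-line deduction.
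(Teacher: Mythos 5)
Your argument is correct and is exactly the intended deduction: the paper states Corollary~\ref{minimal} as an immediate consequence of Lemma~\ref{rho2}, whose final sentence already yields non-minimality in the presence of a good $\rho_2$-pair, and your remark that orientability forces bipartiteness (so the lemma applies) is the only hypothesis that needed checking.
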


By Corollary~\ref{minimal}, graphs containing good $\rho_2$-pairs can be ignored in a process of catalogation of compact orientable $3$-manifolds via $4$-colored graphs. Observe that a graph without good $\rho_2$-pairs is automatically without $2$-dipoles.

\medskip

Let us consider now the effect of switching $\rho_3$-pairs. 

It is easy to see that the two edges of a $\rho_3$-pair belong to the same three $3$-residues; the number $r$ of the ordinary ones is called the  \textit{index} of the $\rho_3$-pair. In the closed case all $\rho$-pairs are of index three, and in the case of connected boundary it is $r\geqslant 2$. 

In most cases, when the switching is performed on a $\rho_3$-pair of index $r\geqslant 2$, there is a strong topological relation between the manifold associated to $\G$ and the manifold(s) associated to $\G'$, as shown by the following lemma.

\begin{lemma} \label{rho3}
 Let $\G$ be a bipartite $4$-colored graph with a $\rho_3$-pair of index $r\geqslant 2$, and denote by $\G'$ the (possibly disconnected) $4$-colored graph obtained from $\G$ by switching the pair. 
\begin{itemize}
\item  [(i)] If $r=3$ and $\G'$ is disconnected with components $\G_1,\G_2$, then $M_{\G}=M_{\G_1}\#M_{\G_2}$.
\item  [(ii)] If $r=3$ and $\G'$ is connected, then  $M_{\G}=M_{\G'}\#(S^2\times S^1)$.
\item  [(iii)] If $r=2$ and $\G'$ is disconnected with components $\G_1,\G_2$, then either  $M_{\G}=M_{\G_1}\#M_{\G_2}$ or $M_{\G}=M_{\G_1}\#_{\partial}M_{\G_2}$.
\item  [(iv)] If $r=2$, $\G'$ is  connected and $\partial(M_{\G'})$ has the same number of components as  $\partial(M_{\G})$, then  either $M_{\G}=M_{\G'}\#(S^2\times S^1)$ or  $M_{\G}=M_{\G'}\#_{\partial} (D^2\times S^1)$.
\item  [(v)] If $r=2$, $\G'$ is  connected and $\partial(M_{\G'})$ has fewer components than  $\partial(M_{\G})$, then  $M_{\G}=M_{\G'}\# (D^2\times S^1)$. 
\item  [(vi)] If $r=2$, $\G'$ is  connected and $\partial(M_{\G'})$ has more components than  $\partial(M_{\G})$, then  $M_{\G}$ is boundary-reducible. 
\end{itemize}
In particular, if $r=2$ and $\partial(M_{\G})$ is a union of tori, then  either $M_{\G}=M_{\G_1}\#M_{\G_2}$ or  $M_{\G}=M_{\G'}\#(S^2\times S^1)$ or $M_{\G}=M_{\G'}\# (D^2\times S^1)$. 
\end{lemma}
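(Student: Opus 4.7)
The plan is to factorize the $\rho_3$-pair switching into a sequence of elementary dipole moves, following the strategy of Lemma~\ref{rho2}, and interpret each move topologically in $M_{\G}$. Let $\{e,f\}$ be the $\rho_3$-pair of color $c$; then $e$ and $f$ lie in exactly three $\hat c$-residues $S_1,S_2,S_3$ of $\G$, of which $r$ are ordinary. First I would produce an explicit factorization of the switching: one can add a proper $2$-dipole adjacent to $\{e,f\}$ in two colors of $\D-\{c\}$, and then cancel a $1$-dipole of color $c$. The $2$-dipole is proper by Proposition~\ref{proper}, so it preserves $M_{\G}$. The critical step is the $1$-dipole cancellation: by Proposition~\ref{proper} it is proper exactly when at least one of the two relevant $\hat c$-residues is ordinary, and otherwise it performs a nontrivial topological operation on $M_{\G}$, which I would identify with surgery along an embedded sphere or properly embedded disk built from the local structure of $P_{\G}$ near the pair.

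The case analysis then proceeds according to the surface produced by this surgery. When $r=3$ (cases (i) and (ii)), the surface is a $2$-sphere $S\subset M_{\G}$ assembled from the disks filling the shared $2$-residues near $e$ and $f$. If $\G'$ is disconnected then $S$ separates $M_{\G}$, yielding $M_{\G}=M_{\G_1}\#M_{\G_2}$; if $\G'$ stays connected then $S$ is non-separating and the standard surgery formula gives $M_{\G}=M_{\G'}\#(S^2\times S^1)$. When $r=2$ (cases (iii)--(vi)), one of the shared residues is singular, so the surgery surface can be either a $2$-sphere (built from the two ordinary shared residues) or a properly embedded disk $D$ whose boundary lies on the boundary component corresponding to the singular residue. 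Analyzing whether this sphere or disk is separating, and whether $D$ is a compression disk that merges or splits boundary components, produces exactly the alternatives of (iii), (iv) and (v). The remaining case (vi) arises when cutting along $D$ splits one boundary component of $M_{\G}$ into two pieces of $\partial M_{\G'}$, which by definition witnesses the boundary-reducibility of $M_{\G}$.

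For the final claim under the toric boundary assumption, I would rule out the remaining alternatives. The $\#_\partial$ options in (iii) and (iv) cannot occur, because a boundary-connected sum of two pieces with toric boundary would merge two tori into a single genus-$2$ surface, contradicting the hypothesis that $\partial M_{\G}$ is a union of tori. Case (vi) cannot occur either: compressing a torus boundary component along a properly embedded disk yields a spherical boundary component for $M_{\G'}$, which is excluded by the construction of $4$-colored graph representations.

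The main obstacle is the case $r=2$: correctly identifying when the non-proper $1$-dipole cancellation corresponds to cutting along a $2$-sphere versus along a properly embedded disk, and keeping careful track of how the boundary components of $M_{\G}$ are transformed under the switching. This requires a detailed polyhedral analysis of $P_{\G}$ near the $\rho_3$-pair and of how the singular $\hat c$-residue is reconstructed after the switching, in order to match each combinatorial alternative (disconnection of $\G'$, preservation, increase or decrease of boundary component count) with the corresponding topological decomposition.
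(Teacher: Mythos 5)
Your high-level picture for $r=3$ --- the switching amounts to cutting $M_{\G}$ along an embedded $2$-sphere assembled from the disks pushed into the three attached $3$-balls, separating vs.\ non-separating giving (i) vs.\ (ii) --- agrees with the paper. But there is a genuine gap, and it sits exactly where you place ``the main obstacle'': the entire $r=2$ analysis, which is cases (iii)--(vi) and hence most of the lemma, is deferred rather than carried out. Moreover, the way you set up that case is off. You say the surgery surface for $r=2$ ``can be either a $2$-sphere (built from the two ordinary shared residues) or a properly embedded disk.'' In fact it is always a single properly embedded disk: the two disks pushed through the $3$-balls of the two ordinary $\hat c$-residues share only the one arc $\alpha_i$, so their union is a disk whose boundary still lies in the \emph{interior} of $M_{\G}$ (on the singular residue $S\times\{0\}$); one must extend it by the annulus $(\alpha_j\cup\alpha_k)\times I$ inside the thickening $S\times I$ to reach $\partial M_{\G}$. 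The $2$-sphere of case (iii) only appears a posteriori, as the union of this disk $D$ with a disk component cut off from the boundary surface --- it is not an alternative cutting surface. Once the disk is correctly identified, the dichotomies of (iii)--(vi) are read off from how $\partial D$ cuts the boundary component (the paper invokes Corollary 2.4.4 of \cite{[Ma]}, extended to boundary genus $g>1$, which is where the $\#_\partial$ options and case (vi) come from); none of this bookkeeping is present in your sketch.

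Separately, your first step --- factorizing the switching into a proper $2$-dipole addition followed by a $1$-dipole cancellation, as in Lemma~\ref{rho2} --- is not established and is likely a dead end here. That factorization was tailored to $\rho_2$-pairs; you give no analogue for $\rho_3$-pairs, and if such a factorization existed with the $1$-dipole proper (which Proposition~\ref{proper} would often force, since $r\geqslant 2$ means at least two of the relevant residues are ordinary), the manifold would not change at all, contradicting the statement. Even when the $1$-dipole is non-proper, the paper provides no tool describing the topological effect of cancelling it, so you would be thrown back on the direct geometric analysis anyway. The paper avoids the detour entirely: it describes the switching directly as cutting the three disks of $P_{\G}$ along arcs and extending those cuts into the attached $3$-balls and product neighborhoods. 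Your final paragraph ruling out $\#_\partial$ and case (vi) under the toric hypothesis is essentially correct and matches the paper's remark that those subcases require $g>1$.
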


\begin{proof} Let $e,f$ be the edges of the pair, $e',f'$ be the new edges created by the switching, and let $c$ be their color. 
In terms of $2$-skeleton the switching operates as follows: (i) split  each one of the three disks containing both $e$ and $f$ by cutting them 
along a properly embedded arc $\alpha_i$, with $i\in \D - \{c\}$, connecting an internal point $E$ of $e$ with an internal point $F$ of $f$; 
(ii) push the edges $\alpha'_i$ and $\alpha''_i$ obtained from the cutting until both $e$ and $f$ disappear; (iii) then indentify 
$\alpha'_i,\alpha'_j,\alpha'_k$ (resp. $\alpha''_i,\alpha''_j,\alpha''_k$) to a single $c$-colored edge $e'$ (resp. $f'$), 
where $  \{i,j,k\}=\D - \{c\}$. 
The cuttings of the disks extend to  $M_{\G}$ in the following way. For each $i\in   \D - \{c\}$, let $S$ be the $\hat i$-residue containing $e$ and 
$f$, considered with its associated disks. If $S$ is the $2$-sphere, the attached $3$-ball is cut along a properly embedded disk with boundary $\alpha_j\cup\alpha_k$, otherwise the attached 
$S\times I$ is cut along an annulus $(\alpha_j\cup\alpha_k)\times I$, where $ \{j,k\}= \D - \{c,i\}$. 

(i,ii) If $r=3$, the cutting is performed along three embedded disks whose union is an embedded sphere.  So (i) and (ii) are proved.

If $r=2$, the cutting is performed along two disks and an annulus, whose union is an embedded disk $D$ with boundary belonging to a boundary 
component of $M_{\G}$, which is 
cut along $\partial D$. Then the arguments of the proof of Corollary 2.4.4 of \cite{[Ma]} apply, considering that in our case the genus $g$ of $S$ may be 
greater than one. 

(iii) If $\G'$ is disconnected and $S$ is non-trivially disconnected by the cutting into two components $S',S''$ then 
$M_{\G}=M_{\G_1}\#_{\partial}M_{\G_2}$, where the boundary connected sum is performed along $S'$ and $S''$ (of course this case cannot occur if the genus of $S$ is 1). If $\G'$ is disconnected and $S$ trivially disconnects into two components $S',S''$, where $S''$ is a disk, then $M_{\G}=M_{\G_1}\#M_{\G_2}$, since $D\cup S''$ is a $2$-sphere  (for $g=1$ this is Case 1 of \cite{[Ma]}).

(iv)  If $\G'$ is  connected and $\partial(M_{\G'})$ has the same number of components as  $\partial(M_{\G})$ then either $S$ is trivially 
disconnected by the cutting or $g>1$ and $S$ is not disconnected by the cutting. In the first case  $M_{\G}=M_{\G'}\#(S^2\times S^1)$ 
(for $g=1$ this is Case 2 of \cite{[Ma]}) and in the second one $M_{\G}=M_{\G'}\#_{\partial} (D^2\times S^1)$.

(v) If $\G'$ is  connected and $\partial(M_{\G'})$ has fewer components than  $\partial(M_{\G})$, then $g=1$ and $S$ is not disconnected by 
the cutting. So $M_{\G}=M_{\G'}\# (D^2\times S^1)$ (this is Case 3 of \cite{[Ma]}).

(vi) If $\G'$ is  connected and $\partial(M_{\G'})$ has more components than  $\partial(M_{\G})$, then $g>1$ and $S$ is disconnected by the 
cutting into two components of positive genus. Therefore  $\partial(M_{\G})$ is compressible (i.e., $M_{\G}$ is boundary-reducible). 
\end{proof} 

\medskip

Because of the previous result, we call \textit{good} a  $\rho_3$-pair of index $r\geqslant 2$.

\begin{corollary}
Let $\G$ be a bipartite $4$-colored graph with a good $\rho_3$-pair. Then either (i) $M_{\G}$ is reducible or (ii) $M_{\G}$ is boundary-reducible  or (iii) $\G$ is not minimal.
\end{corollary}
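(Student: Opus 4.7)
The plan is to apply Lemma~\ref{rho3} to the given good $\rho_3$-pair, which supplies an exhaustive list of six cases (i)--(vi) describing the topology of $M_{\G}$ in terms of the switched graph $\G'$ (or its components, when $\G'$ is disconnected). In each case I will verify that at least one of the three alternatives in the statement---reducibility of $M_{\G}$, boundary-reducibility of $M_{\G}$, or non-minimality of $\G$---holds.

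For the cases that present $M_{\G}$ as a connected sum $M_1 \# M_2$---namely (i), (ii), the first alternative of (iii), the first alternative of (iv), and case (v)---I argue as follows. In (ii) and the first alternative of (iv) one summand is $S^2 \times S^1$, which itself contains a non-separating essential $2$-sphere; hence $M_{\G}$ is reducible regardless of the other factor (even if $M_{\G'} = S^3$, in which case $M_{\G} = S^2 \times S^1$). In (v) the summand $D^2 \times S^1$ is not $S^3$, so if $M_{\G'} \neq S^3$ the gluing sphere is essential and $M_{\G}$ is reducible, while if $M_{\G'} = S^3$ the sum collapses to $M_{\G} = D^2 \times S^1$, which is boundary-reducible via its meridional disk. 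The most delicate subcase is (i) or the first alternative of (iii): if both summands differ from $S^3$ the gluing sphere is essential and $M_{\G}$ is reducible; if, say, $M_{\G_1} = S^3$, then $M_{\G} \cong M_{\G_2}$, and since $\G'$ is disconnected into $\G_1 \sqcup \G_2$ with both parts nonempty, $|V(\G_2)| < |V(\G)|$, so $\G$ is not minimal.

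For the boundary-sum cases---the second alternatives of (iii) and (iv), giving $M_{\G} = M_1 \#_\partial M_2$---the separating disk is a compressing disk of $\partial M_{\G}$ provided neither summand is a $3$-ball. Since every manifold arising from a $4$-colored graph has no spherical boundary components, none of $M_{\G_1}, M_{\G_2}, M_{\G'}$ is a ball, and $D^2 \times S^1$ obviously is not either; hence $M_{\G}$ is boundary-reducible in both subcases. Case (vi) directly asserts boundary-reducibility of $M_{\G}$.

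The main obstacle is the bookkeeping around degenerate subcases in which a summand happens to be $S^3$ or a ball: in the disconnected connected-sum cases this is handled by the minimality argument from strict vertex count inequality; in case (v) it is resolved by the fallback to boundary-reducibility of the resulting solid torus; and in the presence of an $S^2 \times S^1$ factor it is absorbed by its intrinsic non-separating essential sphere.
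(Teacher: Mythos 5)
Your proof is correct and follows essentially the same route as the paper's: apply Lemma~\ref{rho3}, observe that a nontrivial (boundary) connected sum yields alternative (i) or (ii), and handle the degenerate cases either by exhibiting the smaller component graph (non-minimality) or by noting that $S^2\times S^1$ and $D^2\times S^1$ are themselves reducible, resp.\ boundary-reducible. Your explicit remark that the boundary-connected sums can never be trivial---no summand can be a ball since the construction excludes spherical boundary components---is a detail the paper leaves implicit.
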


\begin{proof} Suppose the switching of the  $\rho_3$-pair disconnects $\G$ into two components $\G_1$ and $\G_2$, then either 
$M_{\G}=M_{\G_1}\#M_{\G_2}$ or $M_{\G}=M_{\G_1}\#_{\partial}M_{\G_2}$. If the connected sum is not trivial, then either (i) or (ii) holds. 
If the connected sum is trivial then (iii) holds. Otherwise, suppose the switching does not disconnect $\G$. If $\G'$ is the graph obtained 
after the switching, then either $M_{\G}=M_{\G'}\#(S^2\times S^1)$ or   $M_{\G}=M_{\G'}\#_{\partial} (D^2\times S^1)$ or 
$M_{\G}=M_{\G'}\# (D^2\times S^1)$  or $M_{\G}$ is boundary-reducible. If the connected sum is not trivial then either (i) or (ii) holds, 
otherwise either $M_{\G}=S^2\times S^1$, which is reducible, or   $M_{\G}=D^2\times S^1$, which is boundary-reducible.
\end{proof} 

A $4$-colored graph without good $\rho_2$- and  $\rho_3$-pairs will be called {\it rigid}. 

\begin{corollary} \label{rigid}
Any minimal $4$-colored graph representing a compact, orientable, irreducible and boundary-irreducible $3$-manifold is rigid.\footnote{The 
representation with 6 vertices of $D^2\times S^1$ given in Table~3 is minimal, but it is not rigid since it contains a good $\rho_3$-pair.}
\end{corollary}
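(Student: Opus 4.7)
The plan is to prove the corollary by combining, as a direct consequence, the two preceding results: Corollary~\ref{minimal} (minimality forbids good $\rho_2$-pairs) and the corollary immediately above (that the existence of a good $\rho_3$-pair forces reducibility, boundary-reducibility, or non-minimality of $\Gamma$). Since being \emph{rigid} is defined by the absence of \emph{both} good $\rho_2$-pairs and good $\rho_3$-pairs, the argument splits cleanly into two disjoint steps.

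First I would invoke Corollary~\ref{minimal} to observe that, since $\Gamma$ is a minimal $4$-colored graph representing a compact orientable $3$-manifold (here even with the extra hypotheses of irreducibility and boundary-irreducibility, which are not needed for this step), $\Gamma$ cannot contain any good $\rho_2$-pair. This handles the first half of the rigidity condition without requiring any new work.

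Second, I would argue by contradiction for the $\rho_3$ case: suppose $\Gamma$ contained a good $\rho_3$-pair. The preceding corollary yields three alternatives, namely (i) $M_\Gamma$ is reducible, (ii) $M_\Gamma$ is boundary-reducible, or (iii) $\Gamma$ is not minimal. The assumption that $M_\Gamma$ is irreducible rules out (i), the assumption that $M_\Gamma$ is boundary-irreducible rules out (ii), and the assumption that $\Gamma$ is minimal rules out (iii). This contradiction shows that no good $\rho_3$-pair exists in $\Gamma$.

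Combining the two steps, $\Gamma$ has neither good $\rho_2$-pairs nor good $\rho_3$-pairs, so $\Gamma$ is rigid by definition. There is no real obstacle here: the hard topological content was already spent in establishing Lemmas~\ref{rho2} and \ref{rho3} and the surrounding corollaries; the present statement is essentially a bookkeeping consequence of assembling those pieces under the hypotheses irreducible $+$ boundary-irreducible $+$ minimal. The only subtlety worth flagging in the write-up is the footnote observation that the hypothesis of (boundary-)irreducibility is genuinely used---dropping it allows minimal but non-rigid graphs, as illustrated by the $6$-vertex representation of $D^2\times S^1$.
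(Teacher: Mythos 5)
Your proposal is correct and follows exactly the route the paper intends: the corollary is an immediate combination of Corollary~\ref{minimal} (no good $\rho_2$-pairs in a minimal graph) and the corollary following Lemma~\ref{rho3} (a good $\rho_3$-pair forces reducibility, boundary-reducibility, or non-minimality), noting that orientability makes $\Gamma$ bipartite so both results apply. The paper leaves this assembly implicit, and your write-up supplies precisely the missing bookkeeping.
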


Therefore, for the study of compact irreducible and boundary-irreducible orientable $3$-manifolds, it is enough to consider only contracted rigid $4$-colored bipartite graphs.

\subsection{Orientable $3$-manifolds with toric boundary} \mbox{}

We focus on this case since it contains complements of knots/links in the $3$-sphere, which are the most important class of $3$-manifolds with 
boundary. In this context, prime manifolds are irreducible and boundary-prime, and the only prime manifold which is boundary-reducible is 
$D^2\times S^1$.

\begin{theorem} \ \ \ 
\begin{itemize}
\item There exist exactly $200$ non-isomorphic contracted bipartite $4$-colored graphs (without $2$-dipoles) of order $\leqslant 12$ (see the first row in Table~\ref{tab:2}), representing compact $3$-manifolds with (possibly disconnected) toric boundary, and $106$ of these graphs are rigid (see the third row in Table~\ref{tab:2}). 
\item There exist exactly $124$ non-isomorphic contracted bipartite $4$-colored graphs (without $2$-dipoles) of order $\leqslant 16$ (see the second row in Table~\ref{tab:2}), representing compact $3$-manifolds with connected toric boundary, and only $3$ of these graphs are rigid (see the fourth row in Table~\ref{tab:2}).
\end{itemize}
\end{theorem}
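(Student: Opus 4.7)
The plan is essentially to execute, and then verify, the computer enumeration outlined in Section~\ref{sec3}, restricted to the bipartite case and to prescribed boundary type, and then to filter the resulting census by a rigidity test. Concretely, for each $p\in\{1,\ldots,6\}$ (resp.\ $p\in\{1,\ldots,8\}$ in the connected-boundary case) I would first construct the set $\mathcal S^{(2p)}$ of $3$-colored bipartite graphs whose $\{0,1\}$-, $\{0,2\}$-, $\{1,2\}$-components all have Euler characteristic $\leqslant 0$, then extend each such graph by adding a perfect matching in color $3$ in every possible way. Each addition is immediately filtered by three local tests: (a) bipartiteness of the resulting $4$-regular graph; (b) absence of $2$-dipoles (checked by verifying for each pair of $3$-colored edges sharing two parallel edges that they are not separated by a $2$-residue); (c) contractedness (for each color $c$, either $g_c=1$ or every $\hat c$-residue is singular, this being equivalent to absence of proper $1$-dipoles).

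Next, for each surviving graph $\G$ I would compute the Euler characteristic $\chi(S)=V-E+F$ of each $\hat c$-residue $S$ (with $V,E,F$ the number of $0$-, $2$- and $3$-residues contained in $S$) and retain $\G$ only if every singular $\hat c$-residue is a torus, i.e.\ $\chi(S)=0$; for the second bullet I would further impose that there is exactly one singular $3$-residue. To eliminate isomorphic duplicates I would compute, for every retained $\G$, the Gagliardi--type code of \cite{[CG]} and keep one representative per code class; this delivers the counts $C^{(2p)}_t$ and $C^{(2p)}_{tc}$ to be inserted in the first two rows of Table~\ref{tab:2}, whose sums through $2p=12$ and $2p=16$ should give $200$ and $124$ respectively.

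For the rigidity count, I would scan each $\G$ in the census for good $\rho_2$- and $\rho_3$-pairs. A pair $\{e,f\}$ of equally colored edges is enumerated by looking at all pairs of same-color edges and counting the number of $2$-residues they share; when this number is $2$ (resp.\ $3$) I then perform the virtual switching and test the goodness condition given just before Lemma~\ref{rho2} (resp.\ the index condition $r\geqslant 2$ of Lemma~\ref{rho3}), which only requires inspecting the connectedness and the ordinariness of the $3$-residues after the switch. Keeping only those graphs that pass both tests yields the rigid subcensus, which should reduce to $106$ graphs for $2p\leqslant 12$ and $3$ graphs for $2p\leqslant 16$ in the connected case.

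The main obstacle is not mathematical but combinatorial: even after the $\mathcal S^{(2p)}$ filtering the raw number of candidate $3$-matchings grows very fast (Table~\ref{tab:1} shows $902$ bipartite candidates already at $2p=12$ without the toric constraint, and the analogous $2p=16$ enumeration for the connected-boundary case is substantially larger), so efficiency depends crucially on performing the bipartiteness, dipole and contractedness checks on-the-fly and on pruning by the code as soon as a canonical form can be computed. Correctness of the final counts would be cross-validated by comparing, for small $p$, with the orientable part of Table~\ref{tab:1} and with the five-manifold classification recalled just after it, and by feeding the output triangulations to \verb"3-Manifold Recognizer" and \verb"SnapPy" as described in Section~\ref{sec1}, whose successful identification of every resulting manifold provides an independent sanity check on the enumeration.
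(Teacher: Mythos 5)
Your proposal is correct and follows essentially the same route as the paper: the theorem is a purely computational statement, established by the enumeration procedure of Section~\ref{sec3} (building $\mathcal S^{(2p)}$, adding the color-$3$ matching, filtering by bipartiteness, contractedness, absence of $2$-dipoles and toric boundary via Euler characteristics of the singular $\hat c$-residues, removing isomorphic duplicates via the code of \cite{[CG]}) followed by the rigidity filter based on good $\rho_2$- and $\rho_3$-pairs, which is exactly what you describe.
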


Table~\ref{tab:2} lists in the last two rows the number of non-isomorphic contracted rigid $4$-colored graphs representing compact orientable 
$3$-manifolds with toric boundary, indicated by $C^{(2p)}_{rt}$, and the special cases of connected toric boundary, indicated by 
$C^{(2p)}_{rtc}$, showing the wide simplification due to the property of rigidity, expecially in the case of connected boundary.


\begin{table} [!htp]
\caption{Bipartite $4$-colored graphs with $2p$ vertices and no dipoles representing orientable $3$-manifolds with toric boundary}
\label{tab:2}
\begin{tabular}{|c||c|c|c|c|c|c|c|c|}
  \hline
   {\bf 2p }  & 6 & 8 & 10 & 12 & 14 & 16\\
 \hline\hline \ & \ & \ & \ & \ & \ &\  \\
  {\bf $ C^{(2p)}_t$} &  2 & 4 & 20 & 174 & 1979 & 24058\\
\hline \ & \ & \ & \ & \ & \ & \  \\
 {\bf $C^{(2p)}_{tc}$} &  1 & 0 & 0 & 26  & 13 & 84 \\
 \hline \ & \ & \ & \ & \ & \ &\  \\
  {\bf $ C^{(2p)}_{rt}$} &  1 & 4 & 8 & 93 & 1391 & 4695\\
\hline \ & \ & \ & \ & \ & \ & \  \\
 {\bf $ C^{(2p)}_{rtc}$} &  0 & 0 & 0 & 1  & 0 & 2 \\
   \hline \end{tabular}
\end{table}

\bigskip


\section{$3$-Manifolds with toric boundary represented by $4$-colored graphs of order $\leqslant 12$}\label{sec4}

The next theorem summarizes the results of computer classification of prime orientable $3$-manifolds with toric boundary represented by $4$-colored graphs of order $\leqslant 12$.

These results were obtained by means of programs for the study of $3$-manifolds as described in Section~\ref{sec1}.

\begin{theorem}\ \ \  
\begin{itemize}
\item There exist exactly $24$ non-homeomorphic  compact orientable prime $3$-manifolds with (possibly disconnected) toric boundary, admitting a representation by a $4$-colored graph of order $\leqslant 12$, and exactly $22$ of them are complements of links in the $3$-sphere (see Table~\ref{tab:3}).
\item There exist exactly $\;4\;$ non-homeomorphic compact orientable prime $3$-manifolds with connected toric boundary, admitting a representation by a $4$-colored graph of order $\leqslant 16$, and exactly $2$ of them are complements of knots in the $3$-sphere (the trivial and the trefoil knot).
\end{itemize}
\end{theorem}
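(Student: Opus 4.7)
The plan is to carry out the three-step computer-aided classification outlined in Section~\ref{sec1}, starting from the censuses of contracted rigid bipartite $4$-colored graphs already produced in the previous theorem. By Corollary~\ref{rigid}, every minimal graph representing a compact, orientable, irreducible and boundary-irreducible manifold is rigid, so for the purpose of enumerating \emph{prime} $3$-manifolds with toric boundary it suffices to process the $106$ graphs of order $\leqslant 12$ listed in the third row of Table~\ref{tab:2} (and the $3$ graphs of order $\leqslant 16$ with connected toric boundary listed in the fourth row). Note that the only prime $3$-manifold with toric boundary that fails to be boundary-irreducible is $D^2\times S^1$, so this case must be tracked separately among the low-vertex graphs.

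First, for each graph $\G$ in the rigid census I would run \verb"3-Manifold Recognizer" to build a triangulation of $M_{\G}$, discard non-prime outputs, and record the Waldhausen canonical decomposition whenever $M_\G$ is a graph manifold. Duplicates among graph manifolds are then detected by comparing the canonical lists of Seifert blocks and gluing matrices, invoking Waldhausen's classification \cite{[Wa]}. Second, for the manifolds $M_\G$ not recognized by the Recognizer, I would feed the isomorphism signature of the associated triangulation into \verb"SnapPy" and search its orientable cusped census together with its Platonic manifold censuses for a match, thereby identifying all but a small handful as hyperbolic manifolds of known type. Third, the remaining unrecognized manifolds must be pinned down by hand: in our setting these turn out to be two specific link complements, which I would match against the Thistlethwaite link table distributed with \verb"SnapPy" up to $14$ crossings, yielding the names $L10n111$ and $L12n2205$.

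Once the list of non-homeomorphic prime manifolds is obtained, the final step is to decide, for each entry, whether it is the complement of a link (or a knot, in the connected boundary case) in $S^3$. For each candidate I would search the Thistlethwaite table for a diagram whose complement produces a matching \verb"SnapPy" identifier, or, in the case of graph manifolds, use the canonical Seifert decomposition together with known classifications of Seifert-fibered knot/link complements to either exhibit a diagram or rule out an $S^3$-complement structure. Counting the surviving classes should yield the numbers $24$ (resp.\ $22$ link complements) and $4$ (resp.\ $2$ knot complements: the trivial and trefoil knots) claimed in the statement, producing Table~\ref{tab:3} and Figures~\ref{links1}--\ref{links2} as byproducts.

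The main obstacle is the identification step for manifolds that are neither graph manifolds nor present in \verb"SnapPy"'s built-in censuses: here one must generate candidate link diagrams, compute the corresponding hyperbolic invariants, and verify coincidence of isomorphism signatures. A secondary delicate point is certifying that the two exceptional manifolds are genuinely complements of $L10n111$ and $L12n2205$ rather than of some other link with indistinguishable geometric invariants, which requires confirming coincidence at the level of triangulations (not just numerical invariants) through \verb"SnapPy"'s rigorous isometry check.
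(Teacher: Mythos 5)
Your proposal follows essentially the same route as the paper: restrict to the rigid census via Corollary~\ref{rigid} (with $D^2\times S^1$ tracked separately, exactly as the paper does for the non-rigid $6$-vertex graph $6^1_1$), process the graphs with \verb"3-Manifold Recognizer" to remove non-prime manifolds and classify graph manifolds by their Waldhausen decompositions, pass the unrecognized ones to \verb"SnapPy" via isomorphism signatures, and identify the two remaining exceptional manifolds as the complements of $L10n111$ and $L12n2205$ through the Thistlethwaite link table. This is precisely the computer-aided argument the paper relies on, so the proposal is correct and not materially different.
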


\medskip

Let us describe which kind of $3$-manifolds can be found in Table~\ref{tab:3}. It is worthwhile establishing a standardized nomenclature for the manifolds in the census so that they may be easily referred to. We use a system analogous to that used in the knot and link tables. We sort all $3$-manifolds in the census associated to the rigid graphs with $N$ vertices in arbitrary order (with the exception of $6^1_1$). We use the notation $N_n^k$ for the $n$th manifold constructed from a rigid graph with $N$ vertices on the list with k boundary components.

\subsection{Seifert manifolds}\label{Seifert} \mbox{}

If $F$ is a compact surface with non-empty boundary, $k\in\mathbb N$ and $\{(p_i,q_i)\}_{i=1}^k$ are coprime pairs of integers, with $p_i\geqslant 2$, we define an orientable Seifert manifold $M = \big(F, (p_1, q_1), \ldots, (p_k, q_k)\big)$ as follows. Consider the surface $\Sigma$ obtained from $F$ by removing the interiors of $k$ disjoint discs. The boundary circles of these discs are denoted by $c_1, \ldots, c_k$. Let $c_{k+1}, \ldots , c_n$ be all the remaining circles of $\partial \Sigma$. Let $W$ be the orientable $S^1$-bundle over $\Sigma$. In other words, $W = \Sigma \times S^1$ or $W = \Sigma \widetilde{\times} S^1$, depending on whether $\Sigma$ is orientable or not. We choose an orientation of $W$ and a section $s: \Sigma \to W$ of the projection map $p: W \to \Sigma$. Each torus $T_i = p^{-1}(c_i)$, for $1\leqslant i\leqslant n$, carries an orientation induced by the orientation of~$W$. On each~$T_i$ we choose a coordinate system, taking $s(c_i)$ as the meridian~$\mu_i$ and a fiber $p^{-1}(\{*\})$ as the longitude~$\lambda_i$. The orientations of the coordinate curves must satisfy the following conditions:
\begin{enumerate}
\item If $W = \Sigma \times S^1$, the orientation of each $\lambda_i$ must be induced by a fixed orientation of $S^1$. If $W = \Sigma \widetilde{\times} S^1$, then the orientation of each $\lambda_i$ can be chosen arbitrarily.
\item The intersection number of $\mu_i$ with $\lambda_i$ in~$T_i$ is $1$.
\end{enumerate}
Now, for each $1\leqslant i \leqslant k$, let us attach a solid torus $V_i = D^2 \times S^1$ to $W$ via a homeomorphism $h_i : \partial V_i\to T_i$  taking the meridian $\partial D^2\times\{*\}$ of $V_i$ into a curve of type $({p_i}, {q_i})$. 
The resulting manifold is denoted by  $M = \big(F, (p_1, q_1), \ldots, (p_k, q_k)\big)$, and the cores $\{0\}\times S^1$ of the solid tori $V_i$ in $M$ are called {\it exceptional fibers} of the Seifert manifold. We emphasize that the remaining boundary tori $T_i$ of $M$, for $k+1\leqslant i \leqslant n$, still possess coordinate systems $(\mu_i, \lambda_i)$.

There are seven Seifert manifolds in our census: $6^1_1$, $6^2_1$, $8^3_1$, $10^2_1$, $12^1_1$, $12^2_1$ and $12^4_1$. We denote in Table~\ref{tab:3} by $D^2_i$ and $M^2_i$ the disc and the M\"obius strip with $i$ holes respectively.

\subsection{Graph manifolds}\label{graph-manifolds} \mbox{}

Manifolds which can be obtained from Seifert manifolds by gluing their boundary tori are known as graph manifolds of Waldhausen. The structure of the seven graph manifolds $8^4_1$, $10^3_1$, $12^4_2$, $12^4_3$, $12^4_4$, $12^4_5$ and $12^5_1$ arising in our census is very simple: 
each of them is obtained by gluing together either two or three Seifert manifolds as follows. 



\begin{enumerate}
 \item Let $M, M'$ be two Seifert manifolds with fixed coordinate systems on $\partial M, \partial M'$, where $\partial M\neq \emptyset, \partial M'\neq \emptyset$. Choose arbitrary tori $T$ and $T'$ of $\partial M$ and $\partial M'$, respectively. We say that a homeomorphism $f_A: T\to T'$ corresponds to a matrix $A=(a_{ij})\in GL_2(Z)$  if $f_A$ takes any curve of type $(m, n)$ on $T$ to a curve of type $(a_{11}m+a_{12}n, a_{21}m+a_{22}n)$ on $T'$. We can then define $M\cup_A M'$ as $M\cup_{f_A} M'$. 

In particular, in our census $M$ and $M'$ can only be either $D^2_i\times S^1$, for $2\leqslant i\leqslant 3$, or $\seifuno {D^2_1}21$.
 \item Let $M, M', M''$ be three Seifert manifolds with fixed coordinate systems on $\partial M, \partial M', \partial M''$, where $\partial M\neq \emptyset, \partial M'\neq \emptyset, \partial M''\neq \emptyset$. Choose arbitrary tori: $T$ of $\partial M$, $T'_1$ and $T'_2$ of $\partial M'$ and $T''$ of $\partial M''$. Let $f_A: T\to T'_1$, $f_B: T''\to T'_2$  be homeomorphisms corresponding to the matrices $A, B\in GL_2(Z)$, then we can define $M\cup_A M'\cup_B M''$ as $M\cup_{f_A} M'\cup_{f_B} M''$. 

In particular, in our census each one of $M,\ M'$ and $M''$ is fibered over a disc $D^2_i$, for $1\leqslant i\leqslant 2$, with $i$ holes and it has at most one exceptional fiber.
\end{enumerate} 
 
\subsection{Hyperbolic manifolds} \mbox{}

Let $M$ be a compact $3$-manifold with toric boundary. We say that $M$ is hyperbolic if, after removing the boundary, we get a cusped $3$-manifold, i.e. a noncompact hyperbolic manifold $\operatorname{Int}(M)$ of finite volume.

We call a cusped finite volume hyperbolic $3$-manifold {\it Platonic} if it can be decomposed into ideal Platonic solids. A {\it tetrahedral} or {\it octahedral} manifold is a Platonic manifold 
made up of the corresponding ideal Platonic solids. The census of tetrahedral manifolds with at most $25$ (orientable case) and $21$ (non-orientable case) tetrahedra was provided in \cite{[FGGTV]}. The tetrahedral census and a small version of the octahedral census \cite{[G]} have been incorporated into \verb"SnapPy" \cite{[SnapPy]}.

There are nine hyperbolic manifolds in our census: $8^4_2$, $12^3_1$, $12^4_7$ -- $12^4_{12}$ and $12^5_2$. All of them, except $12^4_{11}$, are contained in the orientable cusped census or in the censuses of Platonic manifolds of \verb"SnapPy". 

In Table~\ref{tab:3} each compact hyperbolic manifold is identified by the notation of its corresponding cusped manifold.

\subsection{Manifold $12^4_6$} \mbox{}

Our census contains a manifold $12^4_6$ which is obtained by gluing together the Seifert manifold $D^2_2\times S^1$ and the hyperbolic manifold $12^3_1$ along some homeomorphisms of their boundary tori.

The program \verb"3-Manifold Recognizer" split the manifold into two pieces, one, 
the Seifert manifold $D^2_2\times S^1$, was identified by the program itself, while \verb"SnapPy" did it for the hyperbolic part. 


\subsection{Knot and link complements} \mbox{}

The classification results up to $12$ vertices are listed in Table~\ref{tab:3}. The last column contains information about knots or links in $S^3$, if any, whose complements are represented by the corresponding $4$-colored graphs (whose codes are shown in the second column). Whenever it was possible to find a prime link whose complement is a given manifold, we indicated it by its name in the Thistlethwaite link table up to $14$ crossings distributed with \verb"SnapPy". 

We remark that the manifolds $8^3_1$, $8^4_1$ and $12^5_1$ are also complements of non-prime links which are the chains with $3$, $4$ and $5$ unknots respectively.

\medskip

Surprisingly, up to 12 vertices no complement of knots in the $3$-sphere appears, except for the complement of the trivial one ($6^1_1$ in Table~\ref{tab:3}), whereas in this context 18 different manifolds are complements of classical links with more than one
component. So we carried on the research for the case of connected toric boundary, in
order to find the complement of some non-trivial classical knot.

With 14 vertices there are exactly 13 contracted $4$-colored graphs without $2$-dipoles representing manifolds with connected toric boundary, but all of them contain good $\rho_3$-pairs and therefore the represented manifolds are either not prime or just solid tori.

With 16 vertices the number of graphs is 84, and among them only two are rigid. The codes of these two graphs are: \\ DABCHEFGHGFEDCBAGCEABHDF  \\ and \\ 
 DABCHEFGHGFEDCBAGHEACBDF, \\
and they both represent the complement of the trefoil knot $3_1$. 
Figure~\ref{fig: trefoil} depicts the first one of these graphs.

\begin{figure*}
\includegraphics[scale=0.65]{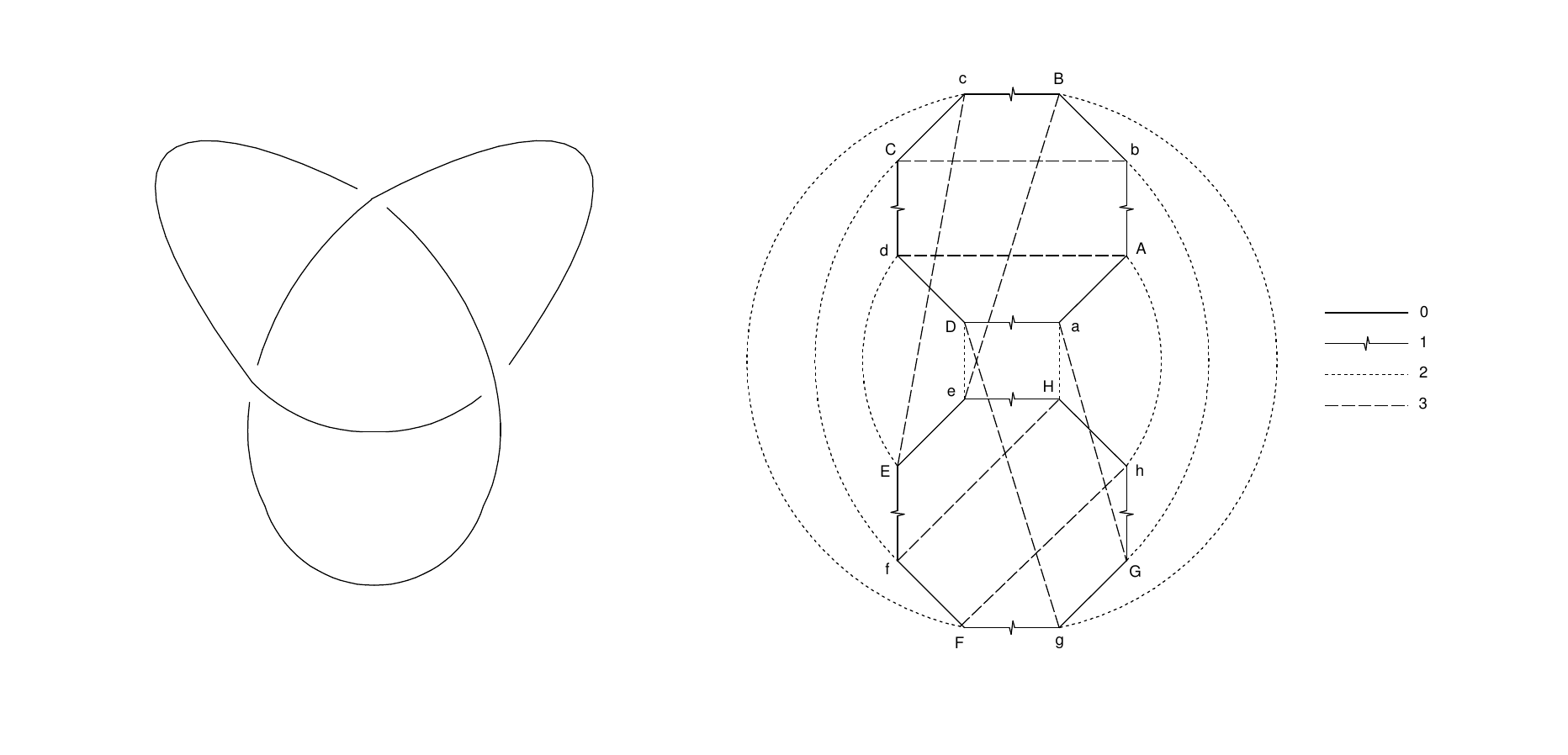}
\caption{The trefoil knot and a graph representing its complement.} 
\label{fig: trefoil}
\end{figure*}

\begin{remark} A construction producing a (non-contracted) $4$-colored graph representing the
complement of  a knot/link $K$ in $S^3$, starting from a connected planar diagram of
$K$, is presented in \cite{Dartois}. However, this construction usually does
not provide minimal representations, even after cancelling proper dipoles and switching good $\rho_2$-pairs. For example, in the case of the complement of the trefoil (resp. the Hopf link) a graph with $18$ vertices (resp. $12$ vertices) is obtained, whereas the minimal one has $16$ vertices (resp. $6$ vertices, as shown in
Table~\ref{tab:3}).
\end{remark}

\begin{table}[h]
\caption{Orientable prime $3$-manifolds with toric boundary represented by $4$-colored graphs of order $\leqslant 12$}
\label{tab:3}
\small{
 \begin{tabular}{|c||c|c|c|c|}
  \hline
   Name & Code & Manifold & Link  \\
 \hline
 \hline 
 $6^1_1$ & CABCBABCA & $D^2\times S^1$ & unknot \\  
 \hline
 $6^2_1$ & CABCABBCA & $D^2_1\times S^1$ & L2a1 \\ 
 \hline
 $8^3_1$ & DABCDCABCADB & $D^2_2\times S^1$ & L6n1 \\  
 \hline
 $8^4_1$ & DABCDCABCBDA & $(D^2_2\times S^1) \bigu 0110 (D^2_2\times S^1)$ & L8n8 \\ 
 \hline
 $8^4_2$ & DABCCDABBCDA & \verb't12047', \verb'ooct02_00001' & L8n7 \\  
 \hline
 $10^2_1$ & EABCDDCEABCDEBA & $\seifuno {D^2_1}21$ & L4a1 \\  
 \hline
 $10^3_1$ & EABCDECDABCDEBA & $\seifuno {D^2_1}21 \bigu 0110 (D^2_2\times S^1)$ & L10n93 \\ 
 \hline
  $12^1_1$ & CABFDEFEDCBAEFABCD & $\seifdue {D^2}2121$ & -- \\  
 \hline
 $12^2_1$ & DABCFEFAECDBBEDFAC & $\seifuno {M^2_1}10$ & -- \\ 
 \hline
 $12^3_1$ & FABCDEEDFBACDFEACB & \verb's776' & L6a5 \\ 
 \hline
 $12^4_1$ & EABCDFFBEADCEFCABD & $D^2_3\times S^1$ &  see fig. \ref{links2} \\ 
 \hline
 $12^4_2$ & EABCDFFDEACBBEADFC & $\seifuno {D^2_1}21 \bigu 0110 (D^2_3\times S^1)$ & see fig. \ref{links2} \\ 
 \hline
 $12^4_3$ & EABCDFFDAEBCDCEFBA & $(D^2_2\times S^1) \bigu 1211 (D^2_2\times S^1)$ & L14n62853 \\ 
 \hline
 $12^4_4$ & EABCDFFEDABCCDEFAB & $\seifuno {D^2_1}21 \bigu 0110 (D^2_2\times S^1) \bigu 0110 (D^2_2\times S^1)$ & see fig. \ref{links2} \\ 
 \hline
 $12^4_5$ & FABCDEFDAEBCDBEFCA & $(D^2_2\times S^1) \bigu 0110 \seifuno {D^2_1}21 \bigu 0110 (D^2_2\times S^1)$ & see fig. \ref{links2} \\ 
 \hline
 $12^4_6$ & EABCDFFDAEBCCFEBAD & $(D^2_2\times S^1) \bigcup \verb's776'$ & L10n111 \\ 
 \hline
 $12^4_7$ & DABCFEFEABDCEFDACB & \verb'o9_44206' & L10n98 \\ 
 \hline
 $12^4_8$ & DABCFEFDEBACCEAFDB & \verb'ooct03_00011' & L10n100 \\ 
 \hline
 $12^4_9$ & DABCFEFEABDCCDEFAB & \verb'otet10_00014' & L10n101 \\ 
 \hline 
 $12^4_{10}$ & FABCDEDEFABCCDEFAB & \verb'otet10_00028' & L12n2201 \\ 
\hline
 $12^4_{11}$ & DABCFEFDEBACECFADB & hyperbolic manifold with $\operatorname{Vol}=10.6669791338$ & L12n2205 \\ 
 \hline 
 $12^4_{12}$ & CABFDEFCEABDDEACFB & \verb'otet12_00009' & L12n2208 \\ 
 \hline 
 $12^5_1$ & EABCDFFEABDCCDFEBA & $(D^2_2\times S^1) \bigu 0110 (D^2_2\times S^1) \bigu 0110 (D^2_2\times S^1)$ & L14n63765 \\ 
 \hline 
$12^5_2$& DABCFEFEDABCBCFEDA & \verb'otet10_00027' & L10n113 \\ 
 \hline
\end{tabular}

}

\end{table}


\begin{figure*}
\centering{
\includegraphics[height=0.12\textheight]{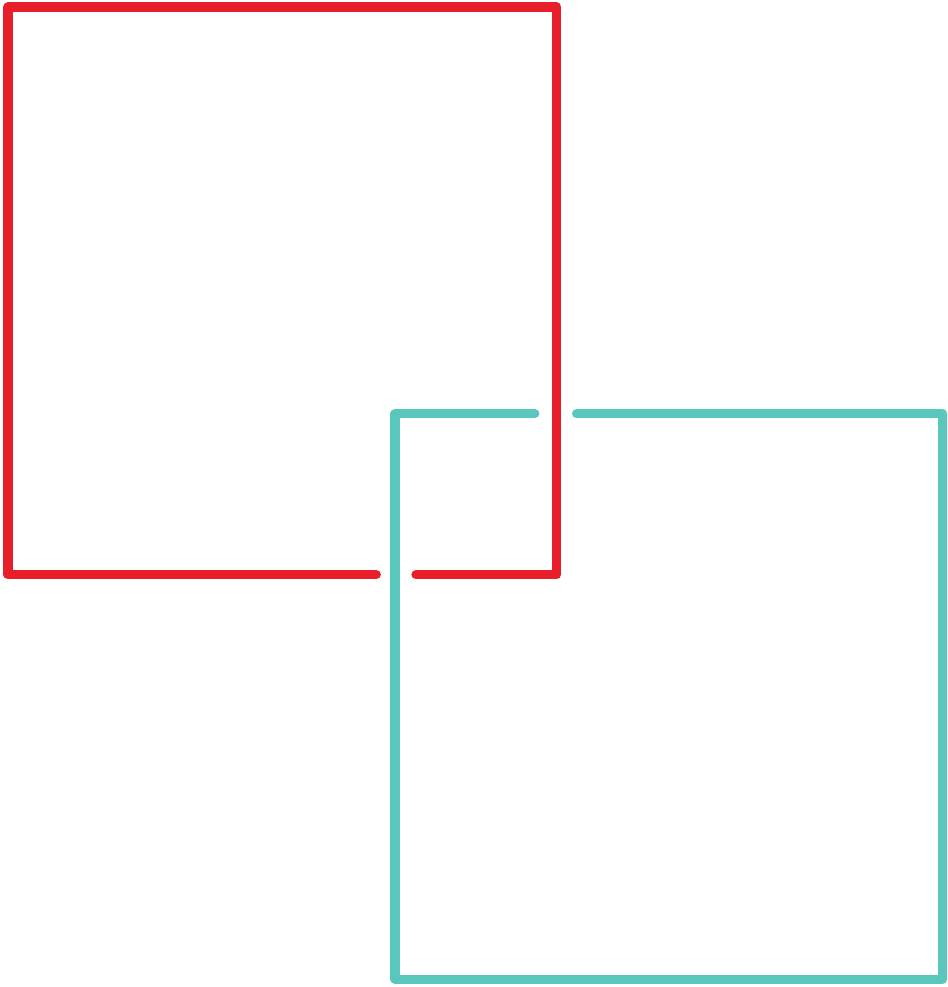}\hspace{10mm}
\includegraphics[height=0.12\textheight]{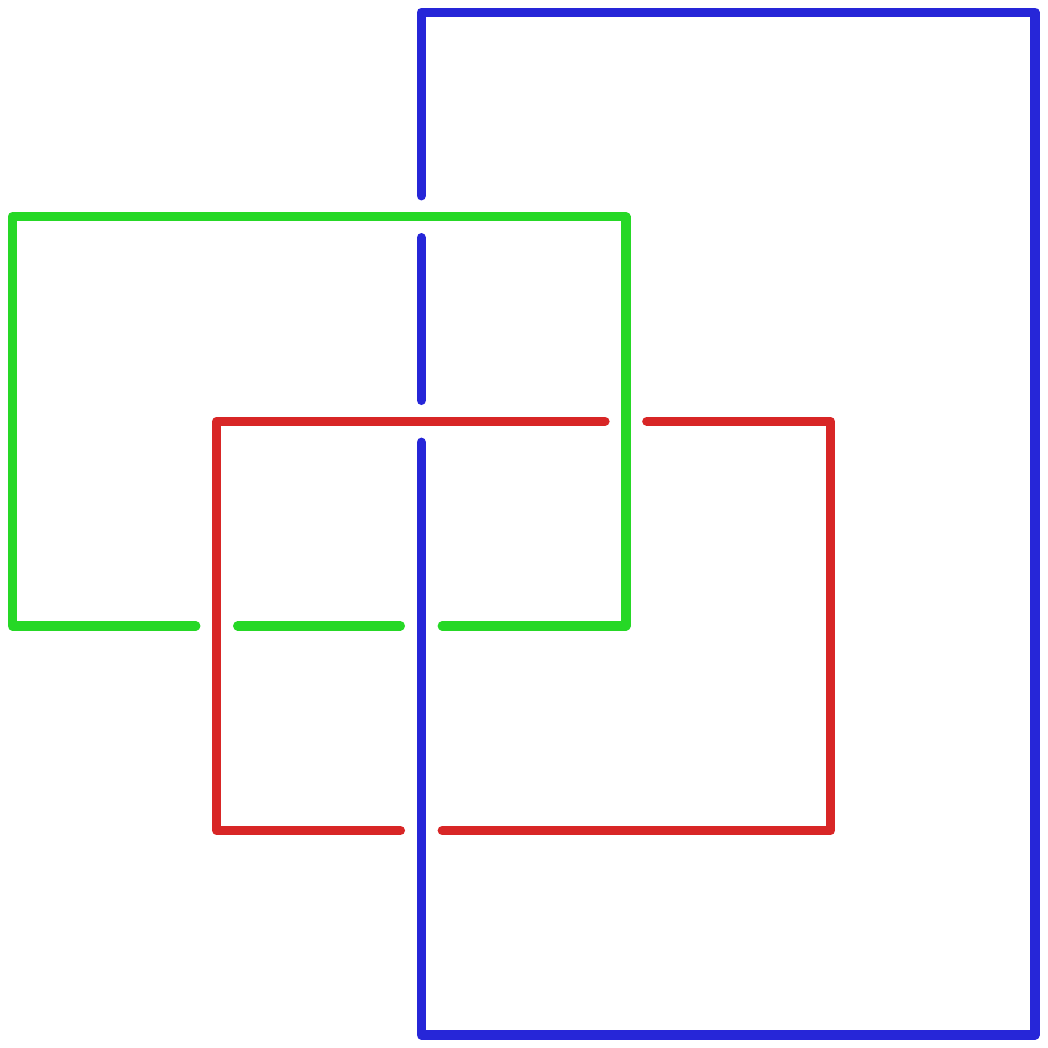}\hspace{10mm}
\includegraphics[height=0.12\textheight]{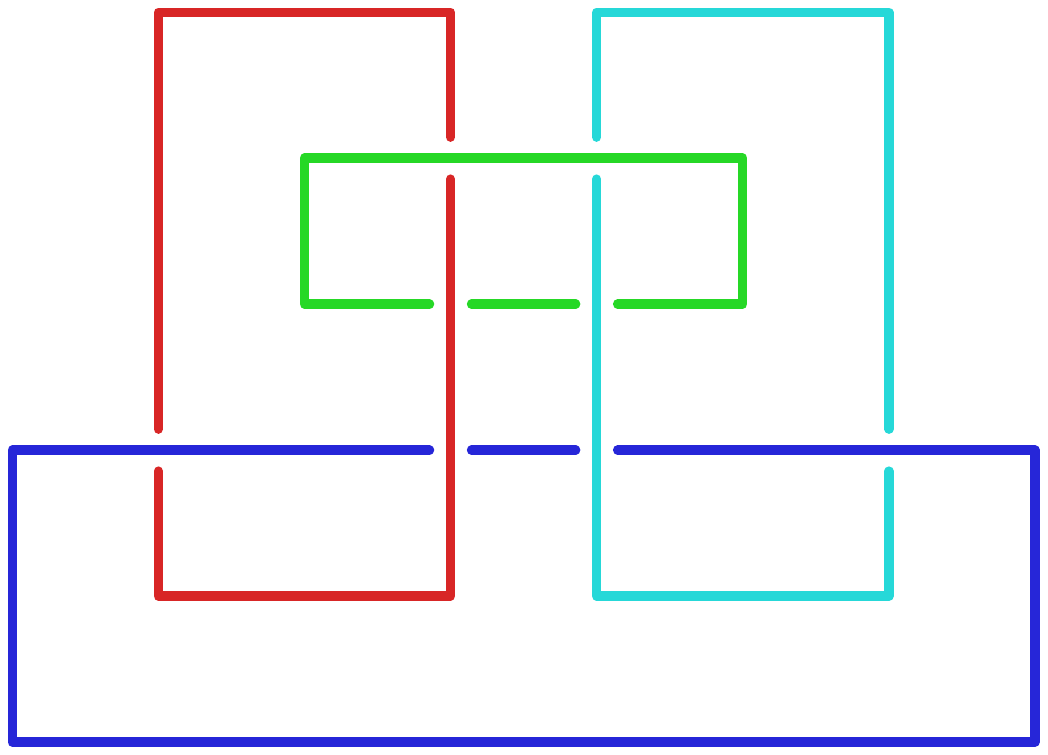}\hspace{10mm}
$$
6^2_1 \qquad \qquad \qquad \qquad \quad
8^3_1 \qquad \qquad \qquad \qquad \quad
8^4_1 
$$
\medskip
\\
\includegraphics[height=0.12\textheight]{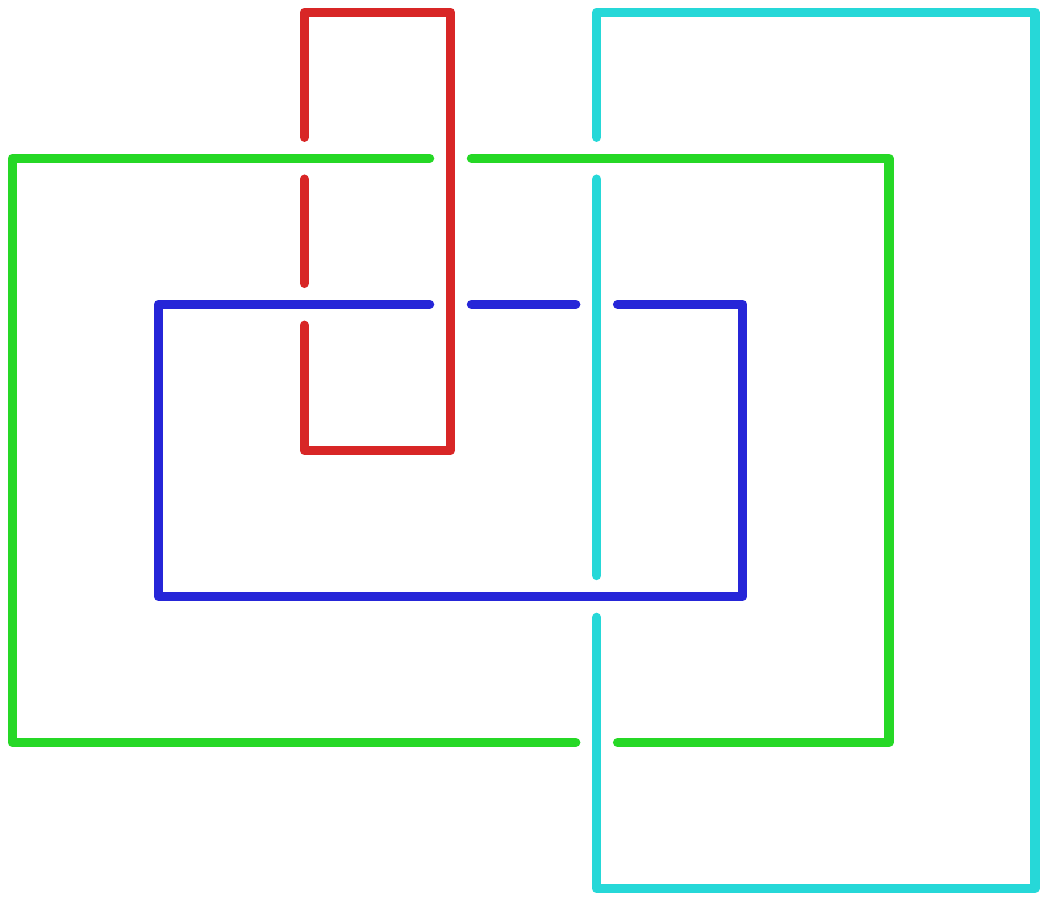}\hspace{10mm} 
\includegraphics[height=0.12\textheight]{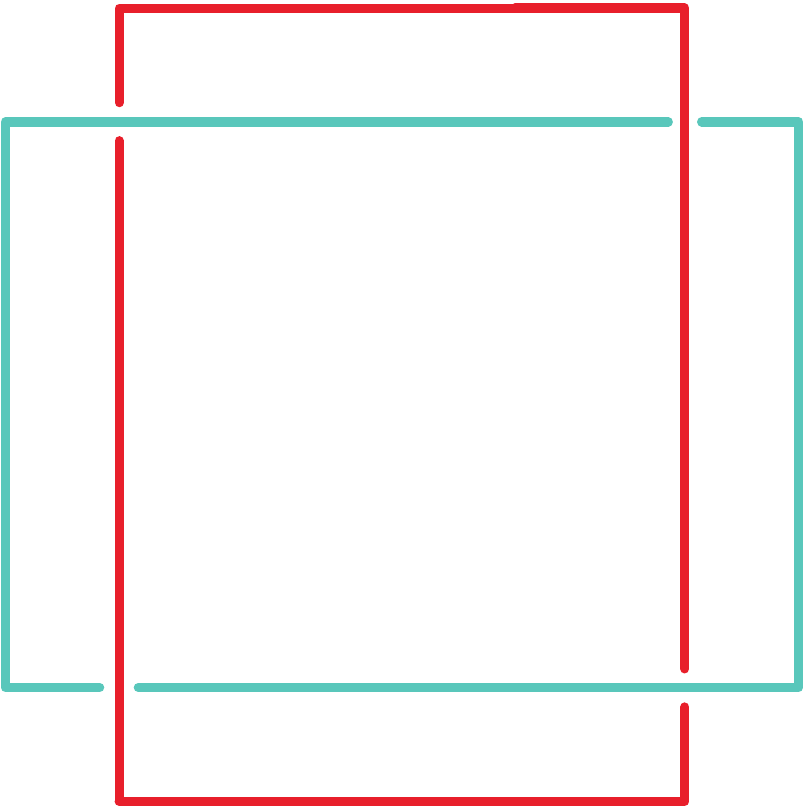}\hspace{10mm}
\includegraphics[height=0.12\textheight]{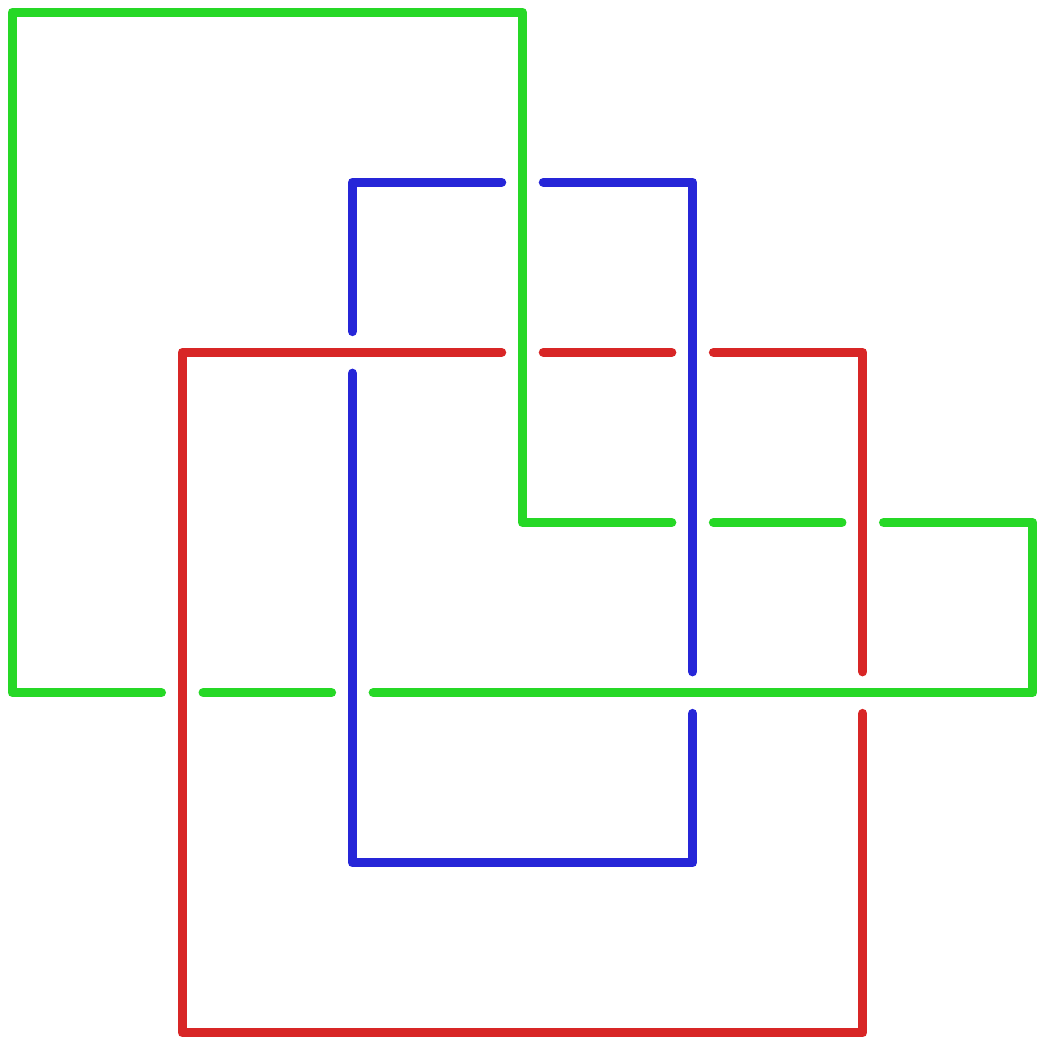}\hspace{10mm}
$$
8^4_2   \qquad \qquad \qquad \qquad \quad
10^2_1 \qquad \qquad \qquad \qquad \quad
10^3_1
$$
\caption{}
\label{links1}
}
\end{figure*}

\begin{figure*}
\centering{
\includegraphics[height=0.12\textheight]{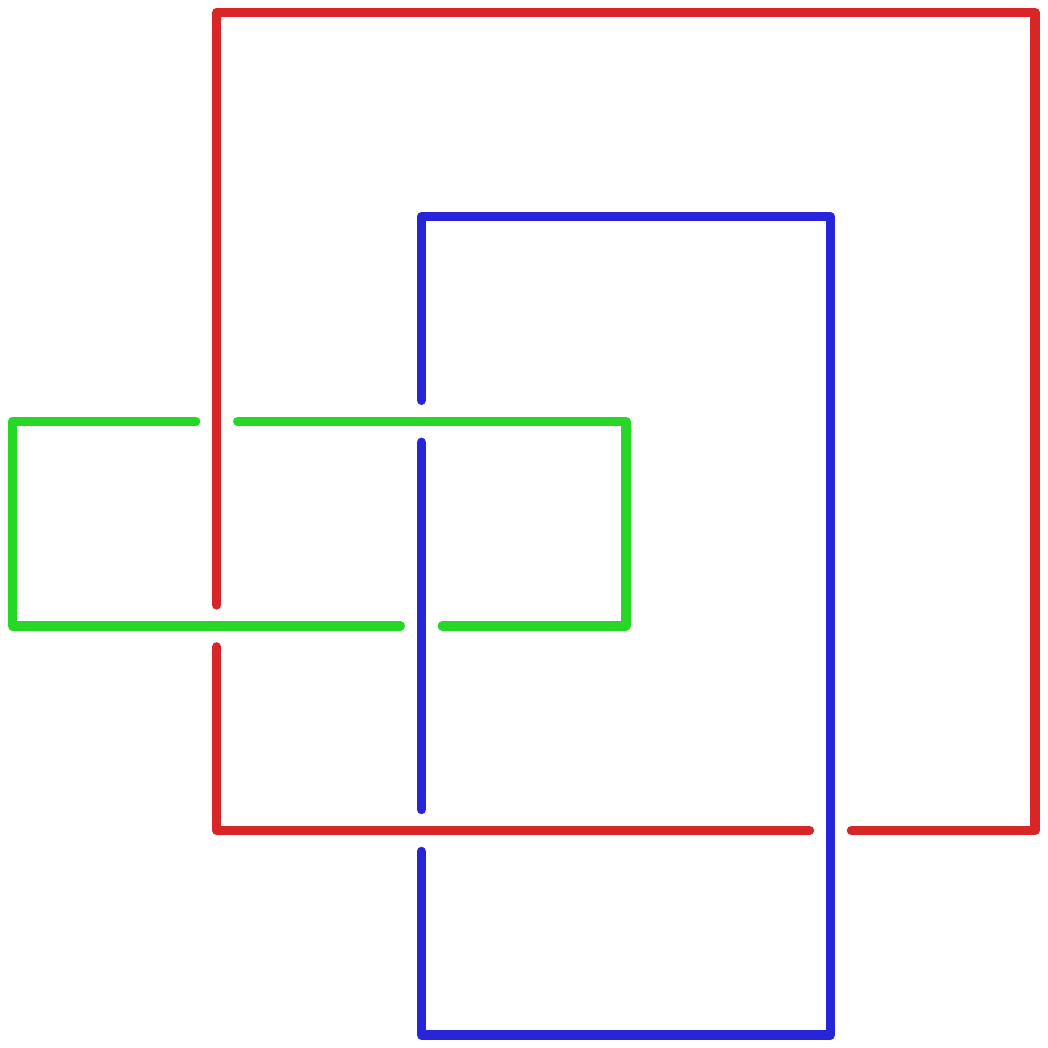}\hspace{10mm} 
\includegraphics[height=0.12\textheight]{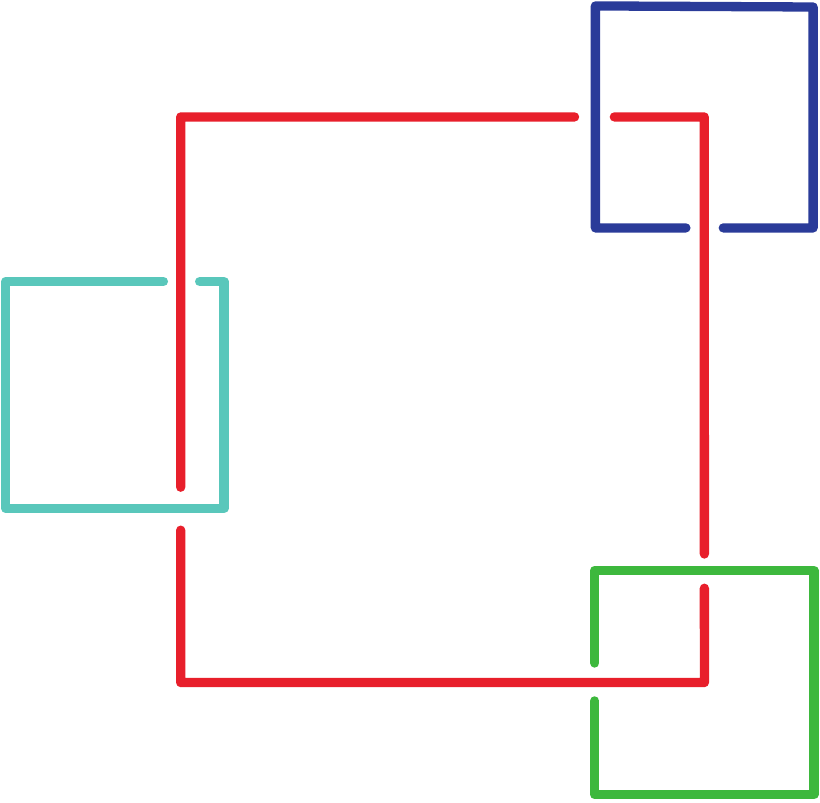}\hspace{10mm}
\includegraphics[height=0.12\textheight]{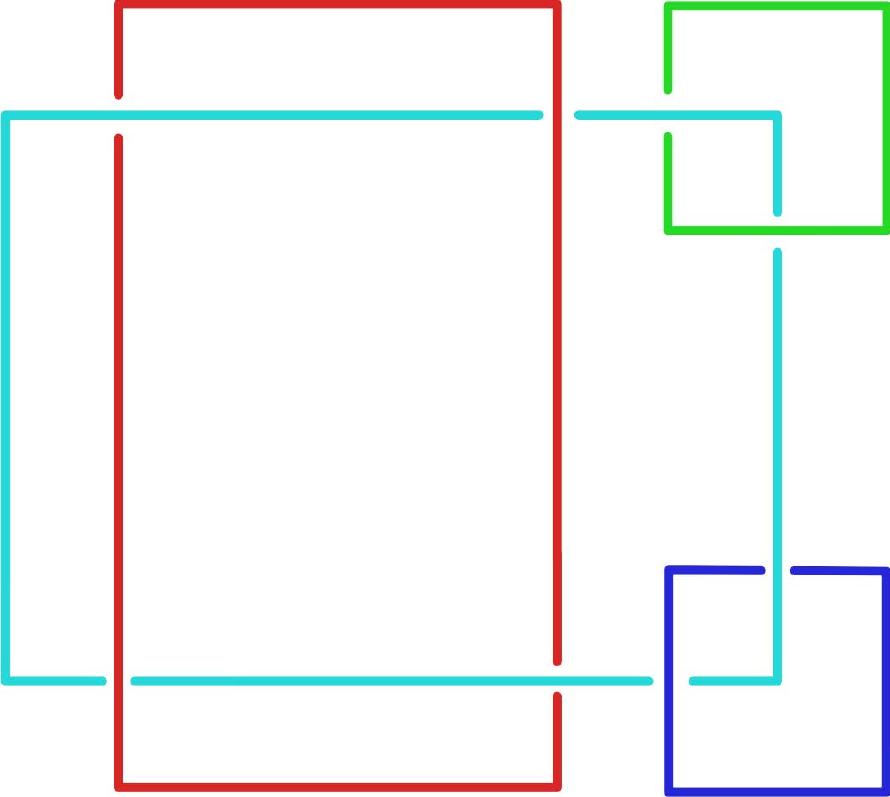}\hspace{10mm}
$$
12^3_1 \qquad \qquad \qquad \qquad \quad
12^4_1 \qquad \qquad \qquad \qquad \quad
12^4_2 
$$
\medskip
\\
\includegraphics[height=0.12\textheight]{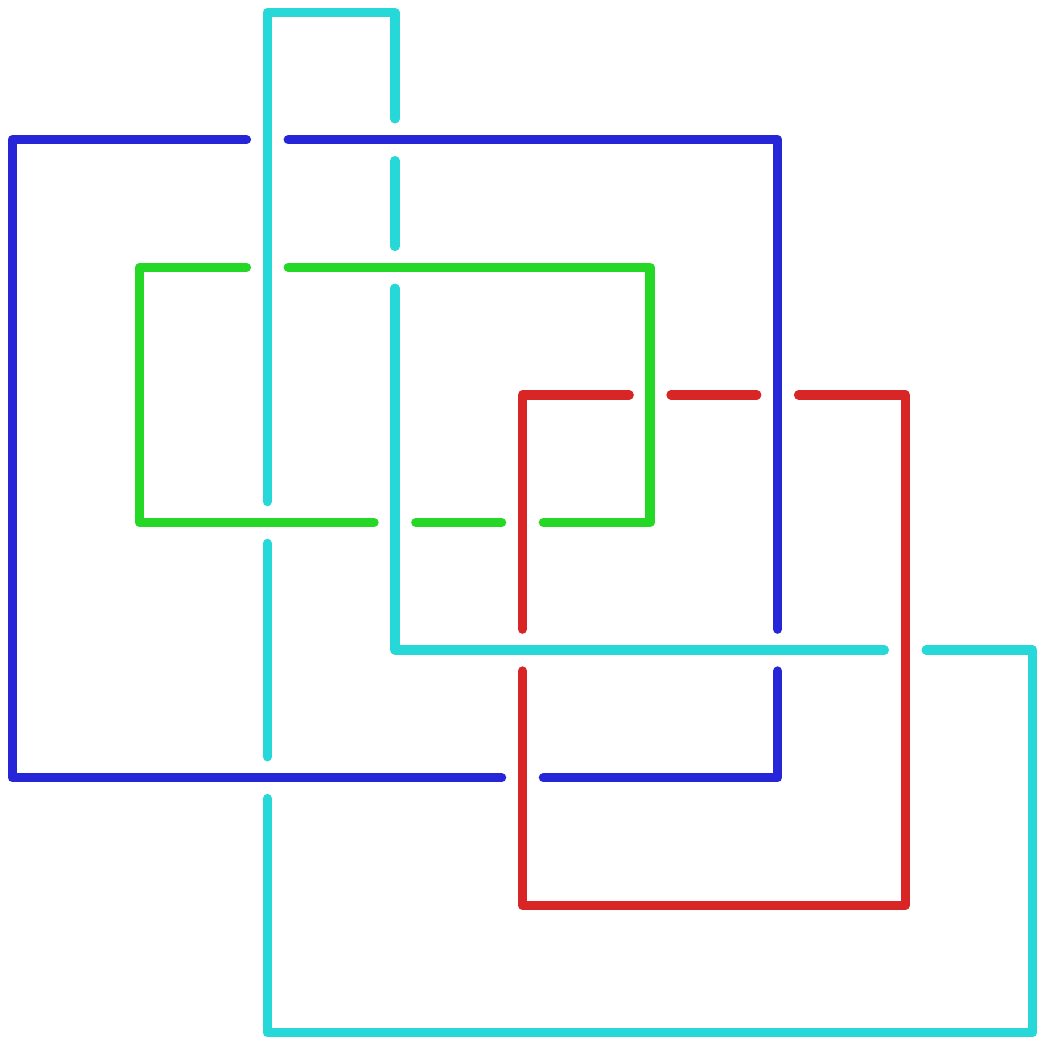}\hspace{10mm}
\includegraphics[height=0.12\textheight]{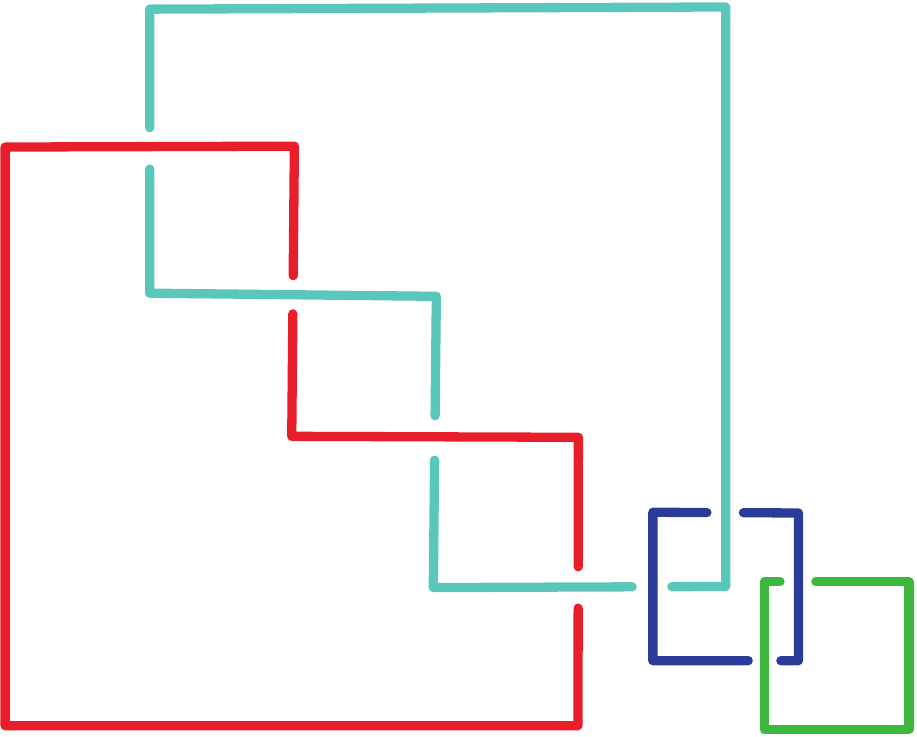}\hspace{10mm}
\includegraphics[height=0.12\textheight]{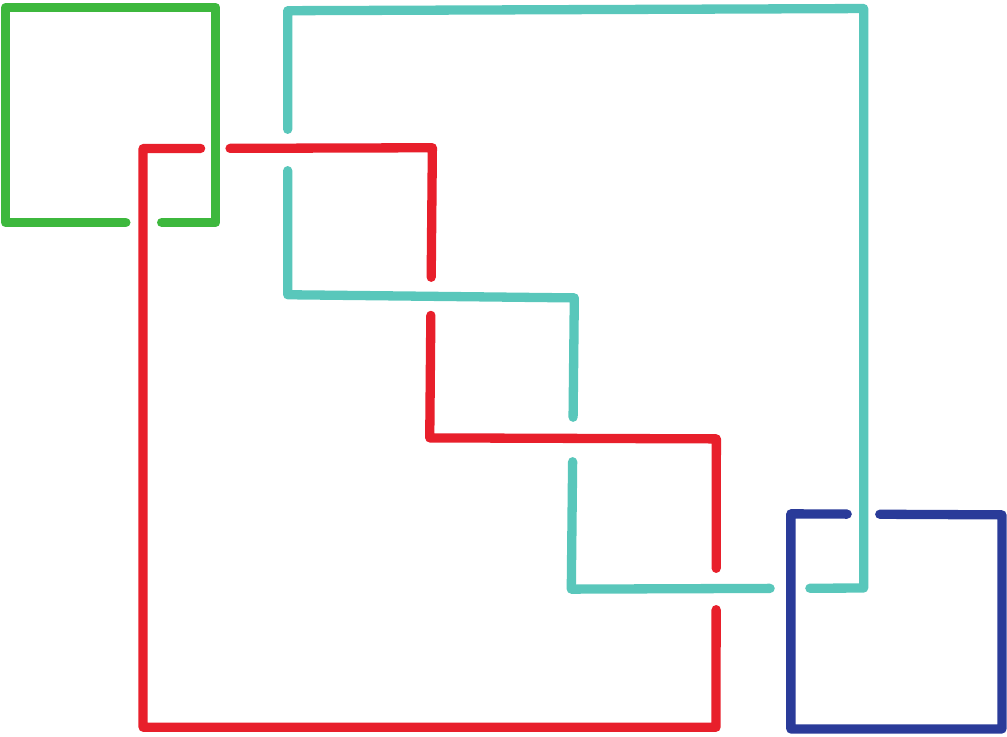}\hspace{10mm}
$$
12^4_3 \qquad \qquad \qquad \qquad \quad
12^4_4 \qquad \qquad \qquad \qquad \quad
12^4_5
$$
\medskip
\\
\includegraphics[height=0.12\textheight]{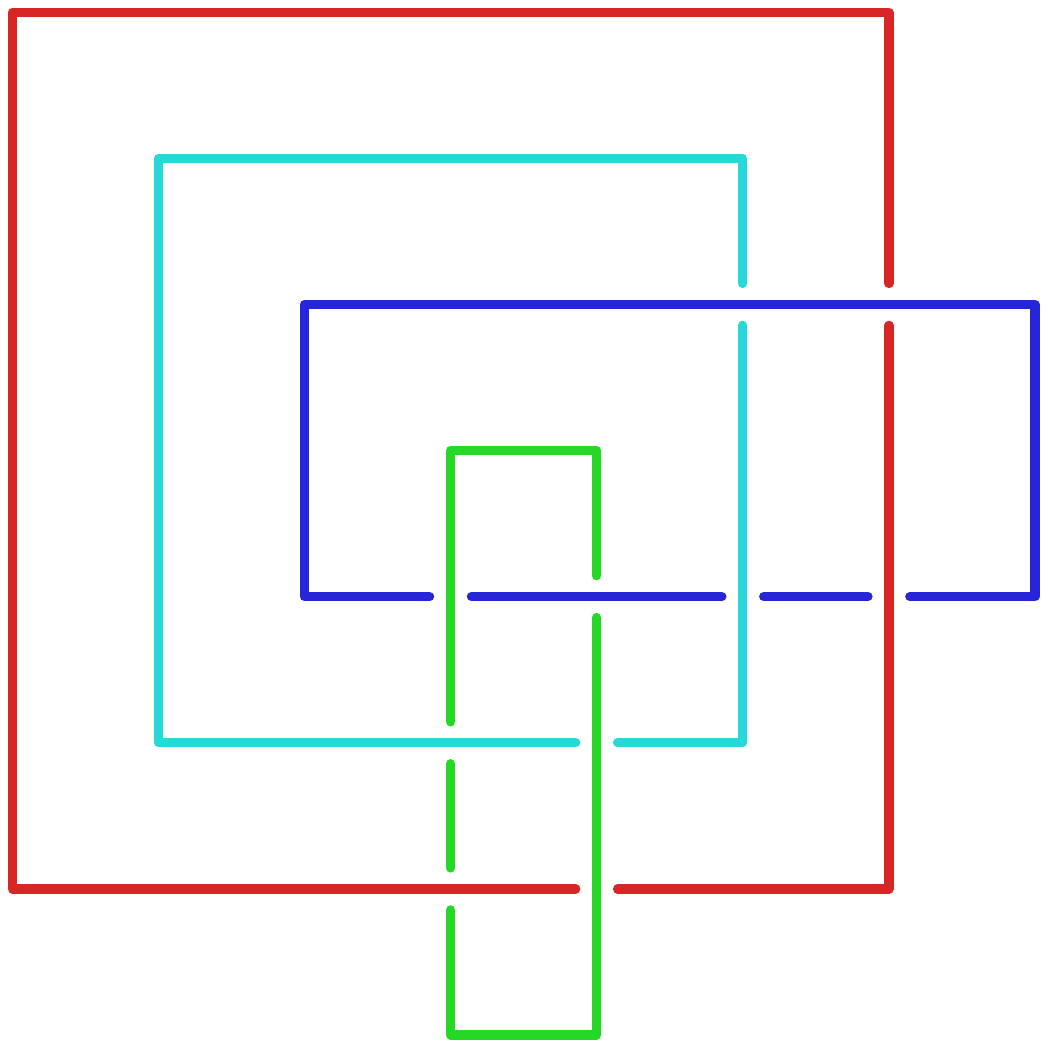}\hspace{10mm} 
\includegraphics[height=0.12\textheight]{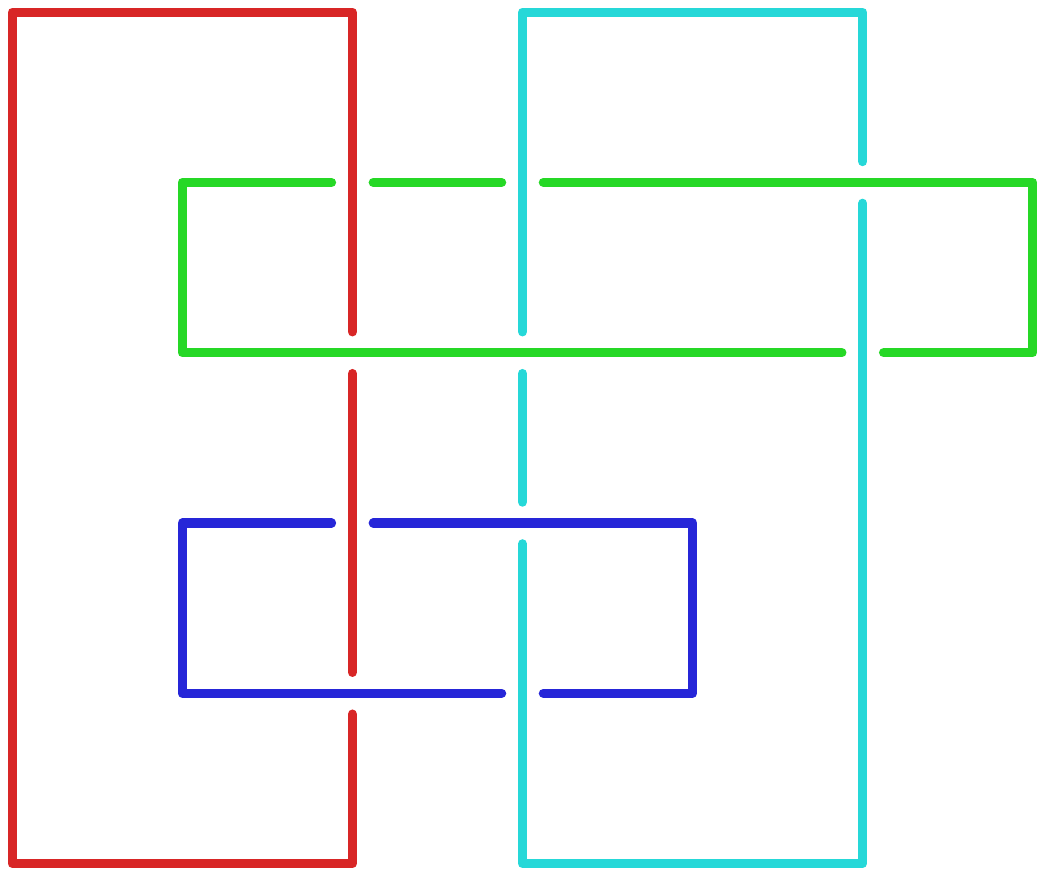}\hspace{10mm}
\includegraphics[height=0.12\textheight]{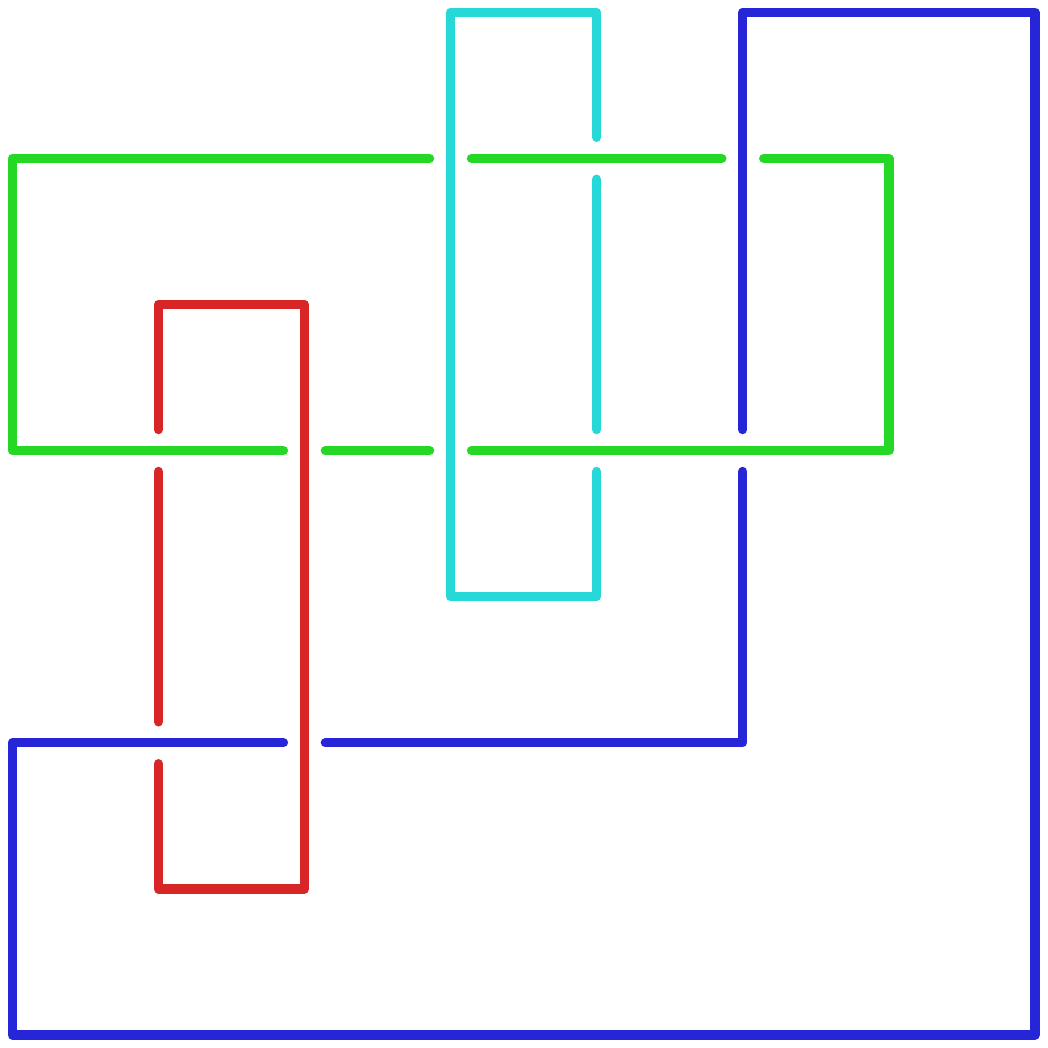}\hspace{10mm}
$$
12^4_6  \qquad \qquad \qquad \qquad \quad
12^4_7  \qquad \qquad \qquad \qquad \quad
12^4_8 
$$
\\
\includegraphics[height=0.12\textheight]{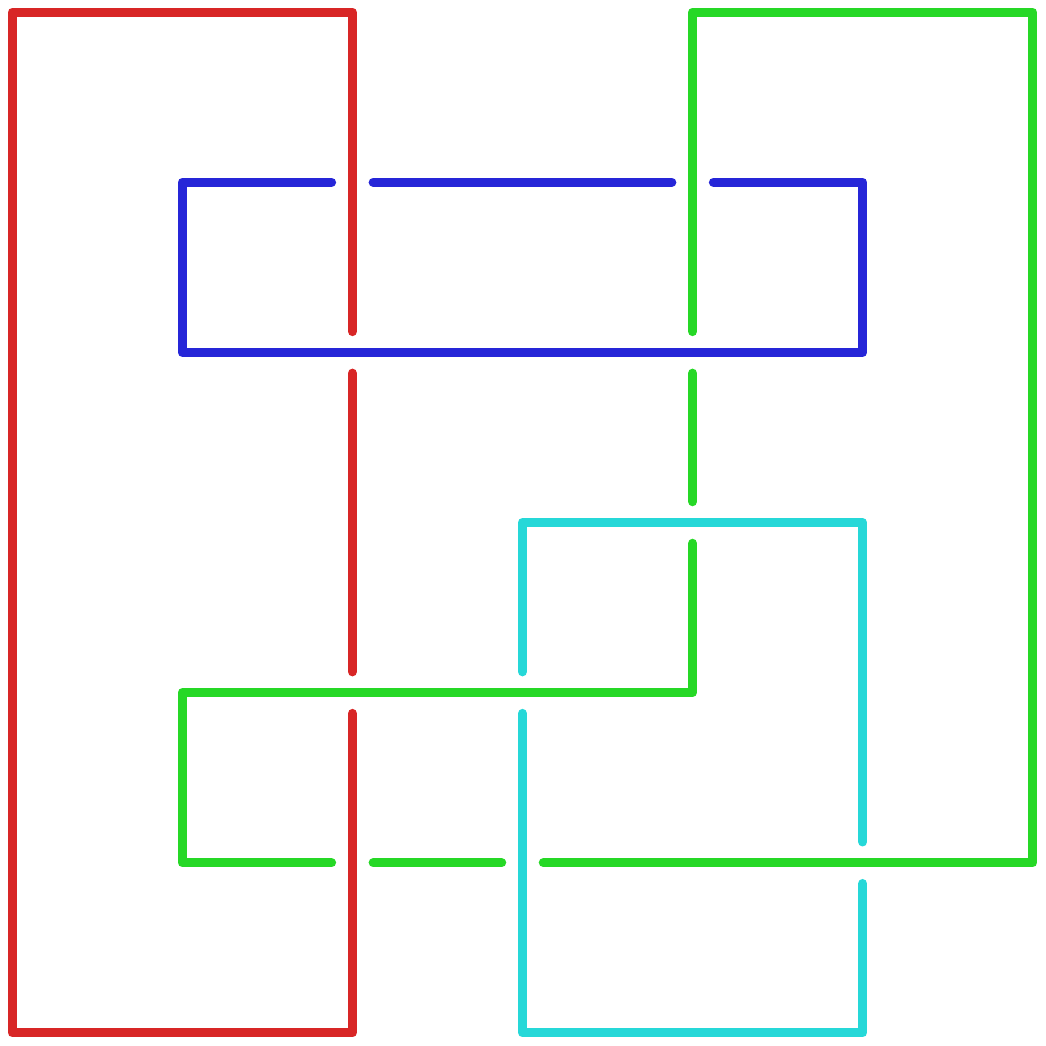}\hspace{10mm} 
\includegraphics[height=0.12\textheight]{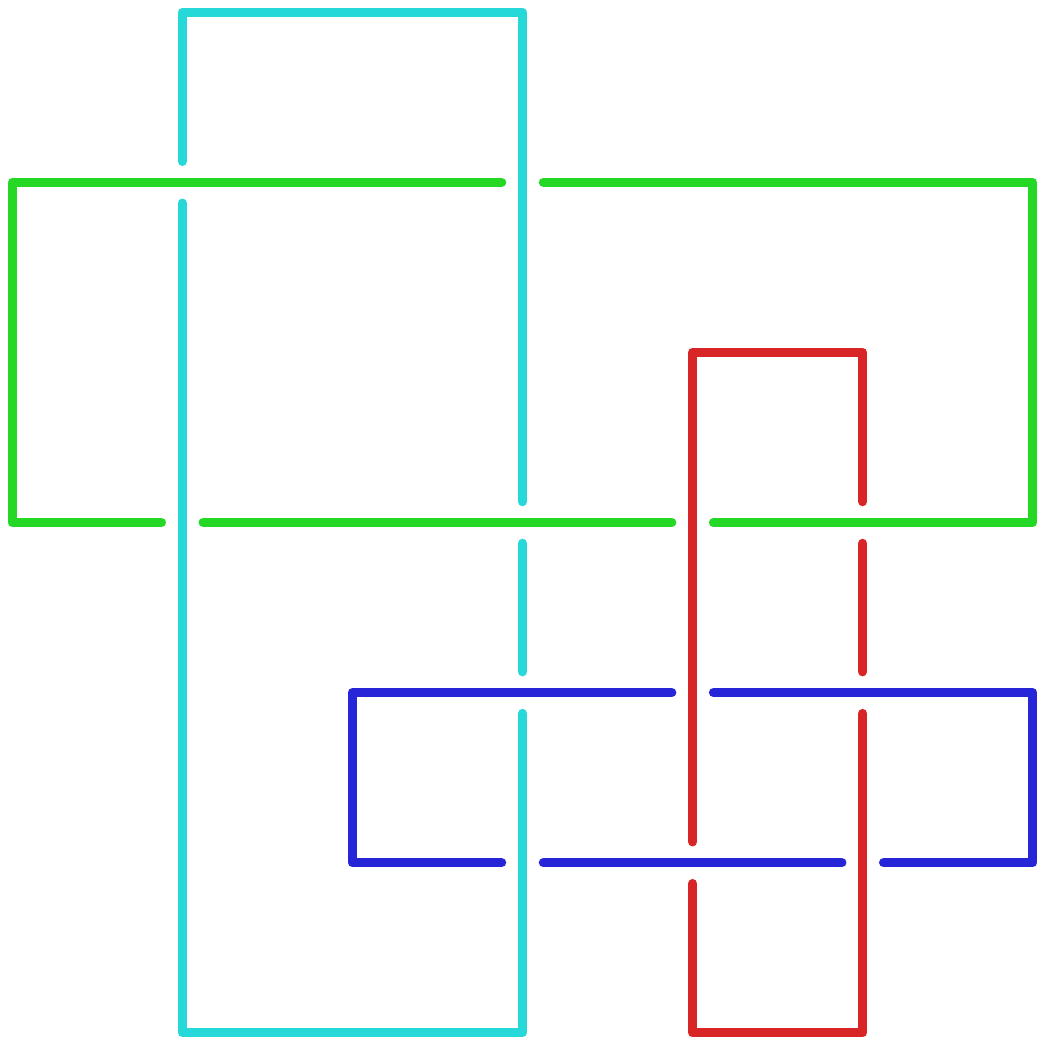}\hspace{10mm}
\includegraphics[height=0.12\textheight]{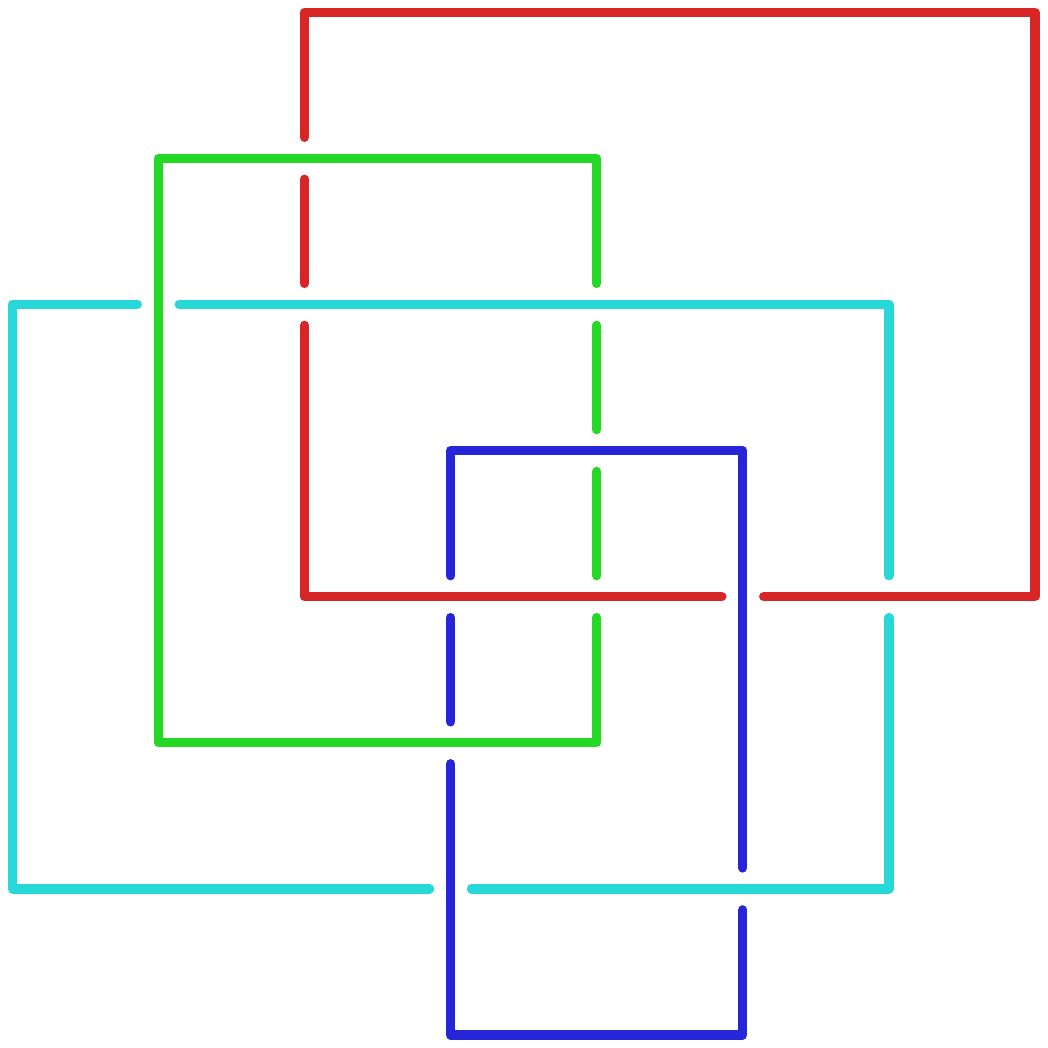}\hspace{10mm}
$$
12^4_9  \qquad \qquad \qquad \qquad \quad
12^4_{10}  \qquad \qquad \qquad \qquad \quad
12^4_{11} 
$$
\medskip
\\
\includegraphics[height=0.12\textheight]{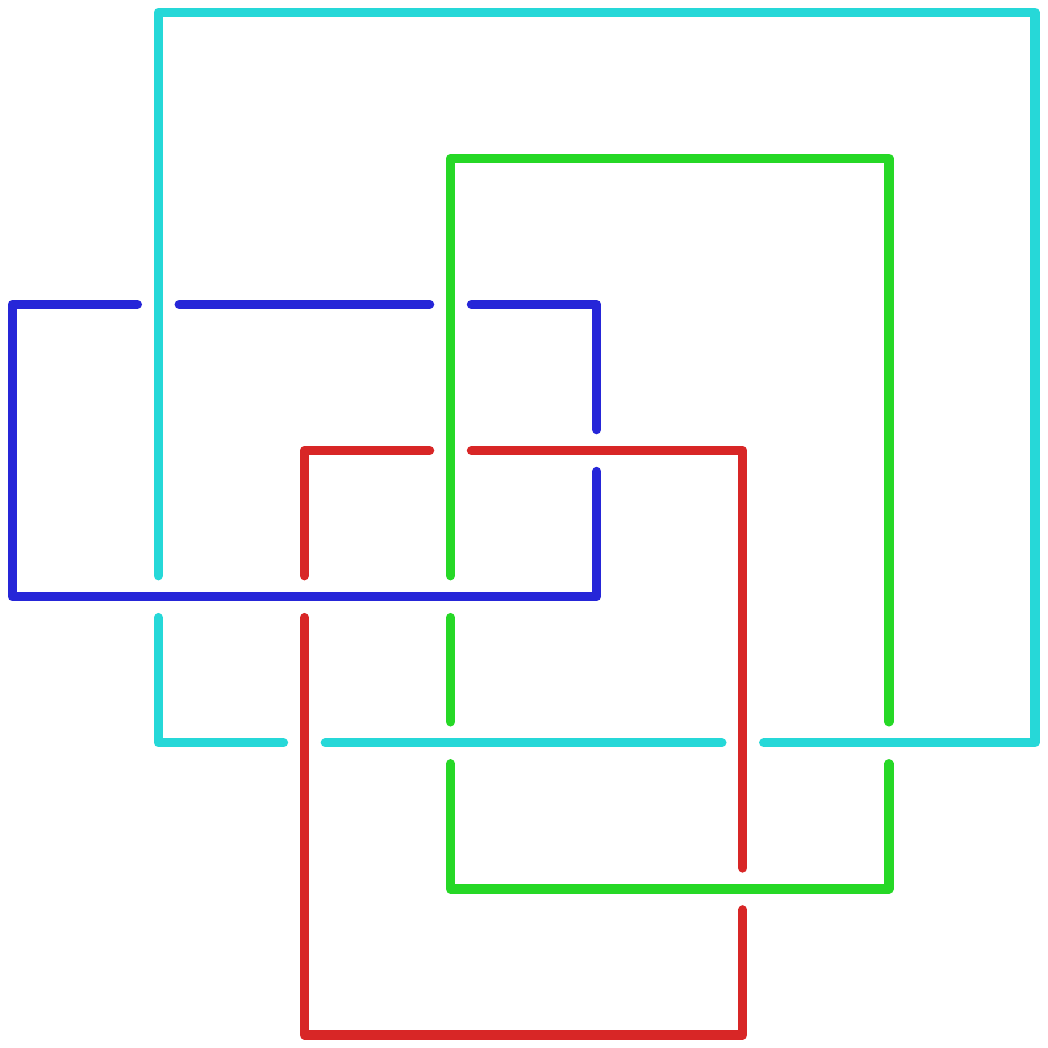}\hspace{10mm} 
\includegraphics[height=0.12\textheight]{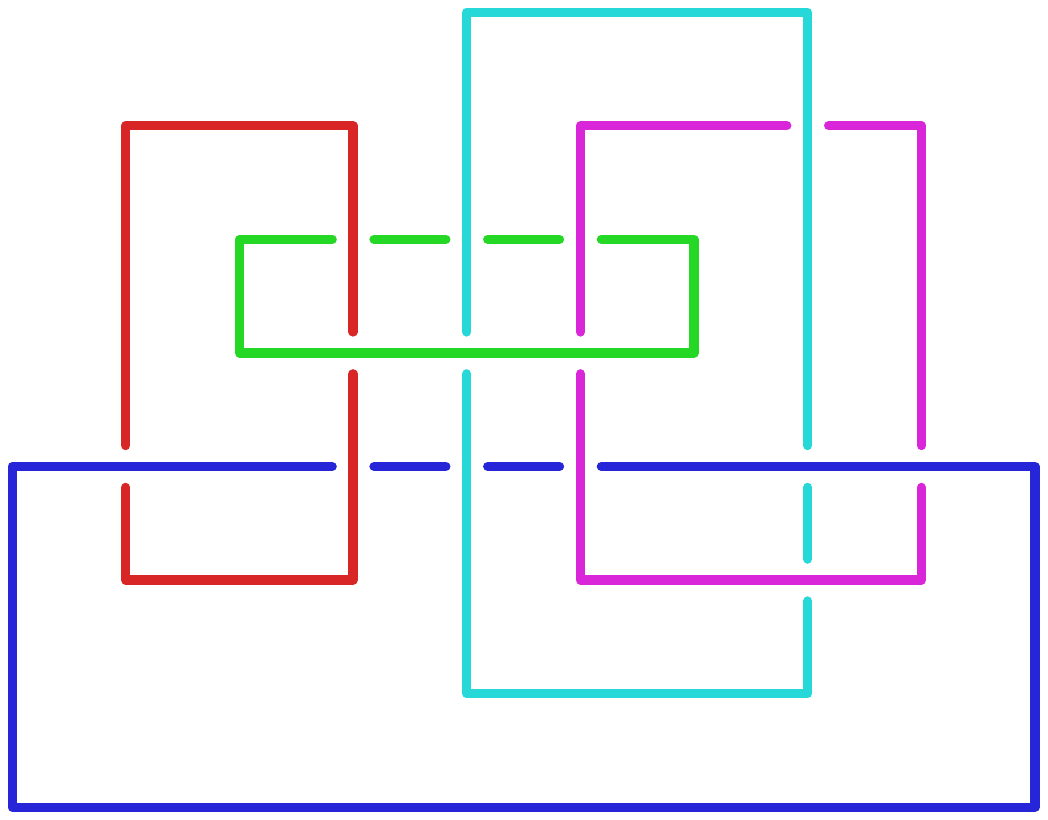}\hspace{10mm}
\includegraphics[height=0.12\textheight]{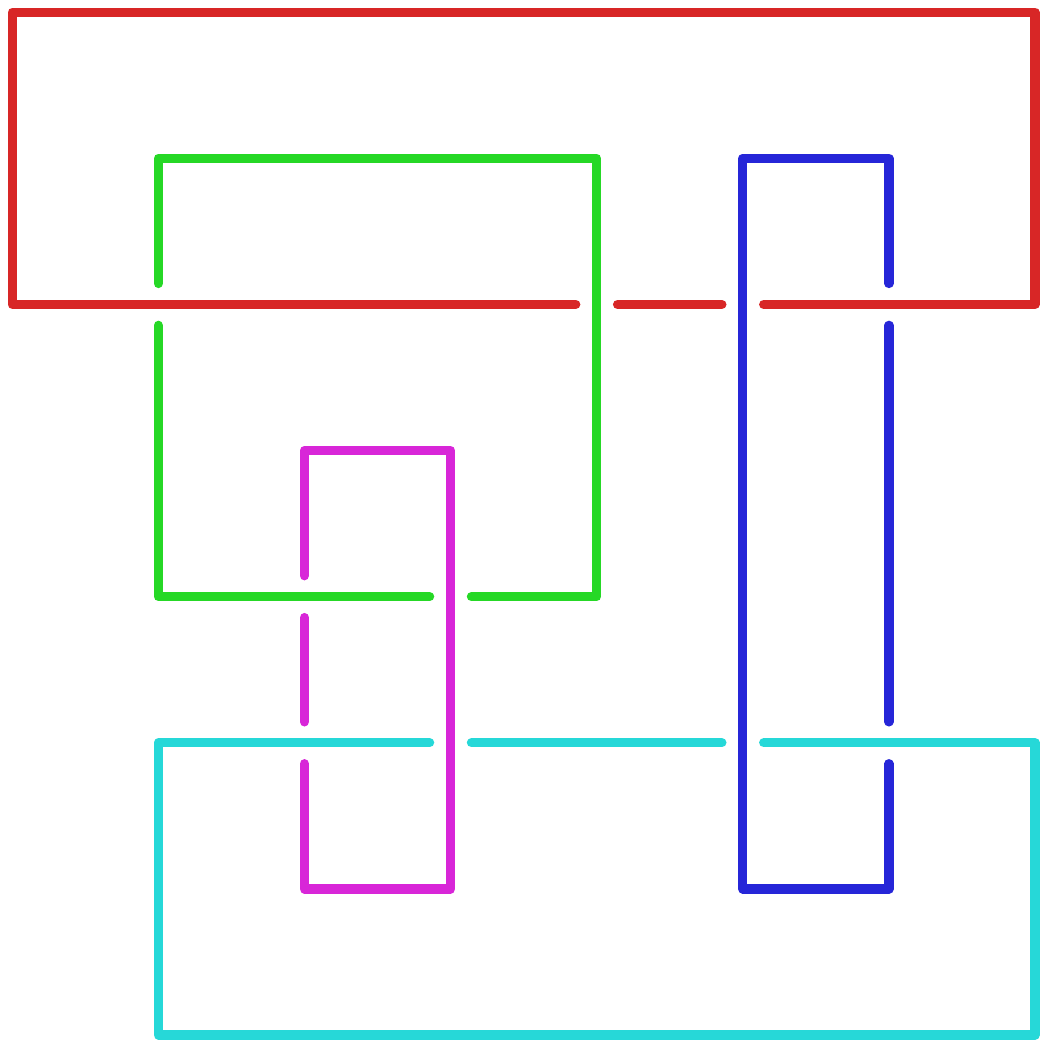}\hspace{10mm}
$$
12^4_{12}  \qquad \qquad \qquad \qquad \quad
12^5_1  \qquad \qquad \qquad \qquad \quad
12^5_2 
$$
\caption{}
\label{links2}
}
\end{figure*}

\bigskip
\bigskip

\noindent {\it Acknowledgement.} P. Cristofori and M. Mulazzani were supported by GNSAGA of INdAM, University of Modena and Reggio Emilia and
University of Bologna, funds for selected research topics.

\noindent E. Fominykh and V. Tarkaev were supported in part by the Laboratory of Quantum Topology, Chelyabinsk
State University (contract no. 14.Z50.31.0020), the Ministry of Education and Science of
the Russia (the state task number 1.1260.2014/K) and RFBR (grant number 16-01-00609).

\noindent We thank Ekaterina Schumakova for drawing some of the pictures.

\bigskip
\bigskip


\end{document}